\def\Var{\mathrm{Var}}
\def\dim{\mathrm{dim}}
\def\Der{\mathrm{Der}}
\def\Nov{\mathrm{Nov}}
\def\Lie{\mathrm{Lie}}
\def\pre{\mathrm{pre}}
\def\M{\mathrm{M}}
\def\Mock{\mathrm{Mock}}
\def\Der {\mathop {\fam 0 Der}\nolimits}
\newtheorem{definition}{Definition}
\newtheorem{proposition}{Proposition}
\newtheorem{theorem}{Theorem}
\newtheorem{remark}{Remark}
\title{4-type subvarieties of the variety of associative algebras}
\author{A. Kunanbayev}
\address{Institute of Mathematics and Mathematical Modeling, Almaty, Kazakhstan}
\email{abai-aga@mail.ru}
\author{B. Sartayev$^{*}$}
\address{Narxoz University, Almaty, Kazakhstan}
\email{baurjai@gmail.com}
\keywords{Associative algebras, operads, free algebras}
\subjclass[2020]{17A30, 17A50, 16R10}
\thanks{${}^{*}$Corresponding author: Bauyrzhan Sartayev   (baurjai@gmail.com)}
\begin{document}

\maketitle

\begin{abstract}
In this paper, we consider four types of subvarieties of the variety of associative algebras. We study these subvarieties from the point of view of operads and show their connections with well-known classes of algebras, such as dendriform algebras and noncommutative Novikov algebras. Finally, we define the commutator and anti-commutator operations on these algebras and derive several identities satisfied by these operations.
\end{abstract}

\section{Introduction}

Associative algebras are closely related to several important classes of algebras, including Lie algebras, Jordan algebras, dendriform algebras, and noncommutative Novikov algebras. For example, the Poincaré-Birkhoff-Witt theorem provides the embedding of any Lie algebra into an appropriate associative algebra under the commutator. Some Jordan algebras can be obtained from associative algebras using the anti-commutator. However, there exist exceptional Jordan algebras that cannot be embedded into any associative algebra via the anti-commutator \cite{Cohn}. For more details and examples of finite-dimensional associative algebras, see \cite{As1, As2}.

Let us consider the variety of associative algebras equipped with operators such as derivations and Rota–Baxter operators. If we define new operations $\succ$ and $\prec$ on a differential associative algebra by
\begin{equation}\label{newop}
a\succ b=a'b\textrm{\;\;and\;\;}a\prec b=ab',
\end{equation}
then the resulting algebra satisfies the defining identities of the variety of noncommutative Novikov algebras \cite{DauSar, erlagol2021}, which are given by
\begin{gather}
    x\succ (y\prec z) = (x\succ y)\prec z,\label{eq:LIdent1} \\
    (x\prec y)\succ z - x\succ (y\succ z) = x\prec (y\succ z) - (x\prec y)\prec z. \label{eq:LIdent2}
\end{gather}
A more general result states that any noncommutative Novikov algebra can be embedded into an appropriate differential associative algebra.

Analogously, if we define new operations $\succ$ and $\prec$ on an associative algebra equipped with a Rota–Baxter operator $R$ by
\begin{equation}\label{newop2}
a\succeq b=R(a)b\textrm{\;\;and\;\;}a\preceq b=aR(b),
\end{equation}
then the resulting algebra satisfies the defining identities of the variety of dendriform algebras \cite{RB2, RB1}, which are given by
\begin{gather}
(a \preceq b) \preceq c = a \preceq (b \preceq c + b \succeq c), \label{eq:LIdent3} \\
(a \succeq b) \preceq c = a \succeq (b \preceq c), \label{eq:LIdent4} \\
(a \preceq b + a \succeq b) \succeq c = a \succeq (b \succeq c). \label{eq:LIdent5}
\end{gather}
Moreover, a free dendriform algebra can be embedded into a free associative algebra with a Rota–Baxter operator. 

It turns out that all these results can be explained in terms of the theory of operads, i.e., the white Manin product of the operad $\Nov$ with any quadratic operad $\Var$ gives an operad $\Der\Var$ which can be embedded into $\Var^{(d)}$ under the operations (\ref{newop}). Analogously, the black Manin product of the operad $\pre$-$\Lie$ with any quadratic operad $\Var$ gives an operad $\pre$-$\Var$, which can be embedded into $\Var^{(R)}$ under the operations (\ref{newop2}). As universal objects, these observations can be presented as follows: 

\begin{picture}(30,80)
\put(188,60){$\mathcal{V}ar^{(d,R)}$}
\put(138,30){$\mathcal{V}ar^{(d)}$}
\put(248,30){$\mathcal{V}ar^{(R)}$}
\put(168,47){\normalsize\rotatebox[origin=c]{45}{$\hookrightarrow$}}
\put(228,47){\normalsize\rotatebox[origin=c]{135}{$\hookrightarrow$}}
\put(58,0){$\mathcal{D}er\mathcal{V}ar:=\mathcal{N}ov\circ\mathcal{V}ar$}
\put(250,0){$pre\textrm{-}\mathcal{V}ar:=pre\textrm{-}\mathcal{L}ie\bullet\mathcal{V}ar$}
\put(115,15){\tiny{(\ref{newop})}\normalsize\rotatebox[origin=c]{45}{$\hookrightarrow$}}
\put(255,15){\tiny{(\ref{newop2})}\normalsize\rotatebox[origin=c]{135}{$\hookrightarrow$}}
\end{picture}

$ $

In \cite{MarklRemm}, it was shown that by decomposing the associative product as $ab=1/2([a,b]+\{a,b\})$, the associative identity becomes equivalent to the following two axioms:
\[
[\{a,c\},b]=\{[a,b],c\}-\{a,[b,c]\},
\]
\[
[b,[a,c]]=\{\{a,b\},c\}-\{a,\{b,c\}\}.
\]
This decomposition is known as polarization, a method of rewriting a non-symmetric binary operation as a combination of symmetric and antisymmetric operations.

In this paper, we define four types of associative algebras based on Mal'cev's classification. In \cite{Mal'cev}, it was shown that the space of identities of degree $3$ in associative algebras can be decomposed into subspaces defined by the following identities:
\begin{gather}
abc+bac+acb+cab+bca+cba=0,
\label{as1} \\
abc-bac-acb+cab+bca-cba=0,
\label{as2} \\
abc+bac-bca-cba=0,
\label{as3} \\
abc+acb-cba-cab=0.
\label{as4}
\end{gather}
Using these identities, we define the corresponding subvarieties of the variety of associative algebras. For these subvarieties, we define the classes of algebras based on the diagram. These subvarieties are closely related to the varieties of alternative, assosymmetric, and left-alternative algebras. Moreover, they may provide useful tools for addressing the problem of identifying Jordan elements within free associative algebras. Notably, when equipped with the commutator operation, these subvarieties yield some of the most well-known classes of Lie algebras, including metabelian Lie algebras, nilpotent Lie algebras, and Lie algebras satisfying an additional identity of degree $5$.

Indeed, every identity of degree $3$ in an associative algebra can be expressed as a linear combination of the listed identities. Moreover, each of these identities generates an invariant subspace within the space of multilinear polynomials of degree $3$ in the associative algebra.

One of the well-known subvarieties of the variety of associative algebras is the variety of perm algebras \cite{perm1, perm2, MS2022}. Analogical subvarieties for the variety of alternative algebras were considered in \cite{KazMat}. Since there is a one-to-one correspondence between a variety of algebras $\Var$ and the quadratic operad associated with it, we adopt the same terminology for both throughout this paper. We denote by $\Var\<X\>$ the free algebra in the variety of algebras $\Var$ generated by the set $X$. Also, we use the notion $\Var^{(d)}$ for the variety of algebras $\Var$ with derivation $d$.

We consider all algebras over a field $K$ of characteristic $0$.

\section{First-type associative algebras}

\begin{definition}
An algebra is called a first-type associative algebra if it satisfies the identity
$$abc+bac+acb+cab+bca+cba=0.$$
\end{definition}

We denote by $\mathcal{A}s_1$ the variety of first-type associative algebras. One motivation for studying algebras in $\mathcal{A}s_1$ is the following well-known result:
\begin{proposition}
\cite{Dzhuma} The dual operad of $\mathcal{A}s_1$ is the alternative operad.
\end{proposition}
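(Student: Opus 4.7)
The plan is to apply the Ginzburg--Kapranov Koszul duality construction for binary quadratic operads to $\mathcal{A}s_1$ and match the output with the known presentation of the alternative operad. Let $E$ be the regular $S_2$-module generated by the product $\mu(x,y)=xy$. The arity-$3$ part of the free operad $\mathcal{F}(E)(3)$ is the $12$-dimensional $S_3$-module spanned by the two tree types $(x_{\sigma(1)}x_{\sigma(2)})x_{\sigma(3)}$ and $x_{\sigma(1)}(x_{\sigma(2)}x_{\sigma(3)})$ as $\sigma$ ranges over $S_3$. I would realise $\mathcal{A}s_1$ as $\mathcal{F}(E)/(R_{\mathcal{A}s_1})$, where the relation space is the sum of the six associators $(x_{\sigma(1)}x_{\sigma(2)})x_{\sigma(3)}-x_{\sigma(1)}(x_{\sigma(2)}x_{\sigma(3)})$ and the one extra symmetrizer $\sum_{\sigma\in S_3}x_{\sigma(1)}x_{\sigma(2)}x_{\sigma(3)}$.

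The first step is to check that these seven generators are linearly independent in $\mathcal{F}(E)(3)$, since the symmetrizer is not a linear combination of associators (it projects nontrivially onto both tree-shape summands). Thus $\dim R_{\mathcal{A}s_1}=7$, giving $\dim\mathcal{A}s_1(3)=5$ and, by the general principle $\dim\mathcal{P}^!(3)=\dim R_{\mathcal{P}}$, the prediction $\dim \mathcal{A}s_1^!(3)=7$. This matches the well-known value $\dim\mathrm{Alt}(3)=7$ and is the first sanity check.

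The second step is to compute $R_{\mathcal{A}s_1}^\perp\subseteq \mathcal{F}(E^\vee)(3)$ under the canonical Koszul pairing with its sign twist. Since $E^\vee\cong E$ (the sign twist of the regular $S_2$-representation is still the regular representation), the dual operad is again binary with a single operation and no symmetry. One obtains a $5$-dimensional relation subspace. I would then check directly that this space coincides with the span of the linearized left-alternative identity $(xy)z-x(yz)+(yx)z-y(xz)=0$ and the linearized right-alternative identity $(xy)z-x(yz)+(xz)y-x(zy)=0$ together with their $S_3$-orbit; a dimension count (this orbit spans exactly a $5$-dimensional $S_3$-submodule isomorphic to $\mathrm{sgn}\oplus V\oplus V$) then forces equality.

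A conceptually cleaner variant runs entirely in terms of $S_3$-isotypic components. The $S_3$-module $\mathcal{F}(E)(3)$ decomposes as $2\mathbf{1}\oplus 2\,\mathrm{sgn}\oplus 4V$, and the associator subspace contributes one copy of each irreducible; adjoining the symmetrizer adds the remaining $\mathbf{1}$, so $R_{\mathcal{A}s_1}\cong 2\mathbf{1}\oplus \mathrm{sgn}\oplus 2V$. Under the sign twist built into Koszul duality, this transports to the decomposition $\mathbf{1}\oplus 2\,\mathrm{sgn}\oplus 2V$ on the dual side, which is precisely the isotypic content of $R_{\mathrm{Alt}}$. The main obstacle is careful bookkeeping of signs in the Koszul pairing and verifying that the orthogonal complement lands exactly on the orbit of the left- and right-alternative linearizations rather than on some other $5$-dimensional submodule with the same character; this is where one must do an explicit pairing computation rather than relying solely on representation-theoretic dimensions.
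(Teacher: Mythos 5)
The paper does not actually prove this proposition: it is quoted directly from Dzhumadil'daev--Zusmanovich \cite{Dzhuma}, where the dual pair $(\mathrm{Alt},\mathcal{A}s_1)$ is computed in the course of showing that the alternative operad is not Koszul. So your Ginzburg--Kapranov verification is a reconstruction of the cited computation rather than an alternative to anything written in the paper. The overall plan is the right one, and the dimension count $\dim R_{\mathcal{A}s_1}=7$, hence $\dim\mathcal{A}s_1^!(3)=7=\dim\mathrm{Alt}(3)$, is correct.

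Two points need repair before this becomes a proof. First, your representation-theoretic bookkeeping of the target is wrong, and inconsistently so: you describe $R_{\mathrm{Alt}}$ once as $\mathrm{sgn}\oplus V\oplus V$ and once as $\mathbf{1}\oplus 2\,\mathrm{sgn}\oplus 2V$ (the latter is $7$-dimensional --- that is the isotypic content of $\mathrm{Alt}(3)$ itself, not of its relation space). The associator subspace is a copy of the regular representation $\mathbf{1}\oplus\mathrm{sgn}\oplus 2V$, and the linearized alternativity relations span exactly the kernel of its projection onto $\mathrm{sgn}$, so $R_{\mathrm{Alt}}\cong\mathbf{1}\oplus 2V$; this is what actually matches the sign-twist of the $5$-dimensional complement $\mathrm{sgn}\oplus 2V$ of $R_{\mathcal{A}s_1}$, so the character check goes through once corrected. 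Relatedly, your justification for the linear independence of the symmetrizer from the associators (``it projects nontrivially onto both tree-shape summands'') fails if one lifts $abc+\cdots=0$ into a single parenthesization, as is standard; independence instead follows because no nonzero linear combination of the six associators is supported on a single tree shape. Second, and more substantively, the decisive step --- that $R_{\mathcal{A}s_1}^{\perp}$ coincides with the $S_3$-orbit of the linearized left- and right-alternative identities, and not with some other submodule of the same character --- is only announced, not carried out; as you yourself acknowledge, the explicit Koszul pairing with its sign conventions on the two tree shapes must be evaluated. Until that computation is written down, the argument is a plausible plan rather than a proof.
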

It was also shown in \cite{Dzhuma} that the operad $\mathcal{A}s_1$ is not Koszul.

Let us now list the defining identities of the operad $\Der \mathcal{A}s_1:=\Nov \circ \mathcal{A}s_1$, where $\circ$ denotes the white Manin product of two quadratic operads. Direct computation gives a noncommutative Novikov operad with the following identities:
\begin{multline*}
(a\succ b)\succ c-a\succ (b\succ c)+(a\succ c)\succ b-a\succ (c\succ b)+(b\prec a)\succ c-b\succ (a\succ c)\\
+b\prec(c\prec a)-(b\prec c)\prec a+(c\prec a)\succ b-c\succ (a\succ b)+c\prec(b\prec a)-(c\prec b)\prec a=0
\end{multline*}
and
\[
a\succ(b\succ c)+(a\succ c)\prec b+b\succ(a\succ c)+(b\succ c)\prec a+(c\prec a)\prec b+(c\prec b)\prec a
=0.
\]

\begin{proposition}
Any algebra in the variety $\Der\mathcal{A}s_1$ can be embedded into an appropriate algebra from the variety $\mathcal{A}s_1^{(d)}$.
\end{proposition}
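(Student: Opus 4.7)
The plan is to mimic the classical construction used to embed any noncommutative Novikov algebra into a differential associative algebra, adapted to the variety $\mathcal{A}s_1$. Given $A \in \Der\mathcal{A}s_1$ with a $K$-basis $X$, I would form $F := \mathcal{A}s_1^{(d)}\<X\>$, the free $\mathcal{A}s_1$-algebra with derivation $d$ on the set $X$, and define two derived binary operations on $F$ by $u \star v := d(u)\,v$ and $u \triangleleft v := u\,d(v)$. By the very definition $\Der\mathcal{A}s_1 = \Nov \circ \mathcal{A}s_1$ via the white Manin product, the operations $(\star,\triangleleft)$ turn $F$ into a $\Der\mathcal{A}s_1$-algebra, i.e., they satisfy both displayed identities preceding the proposition.

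Next I would quotient out the relations that identify these derived operations with the given structure of $A$. Concretely, let $I \subset F$ be the two-sided $\mathcal{A}s_1^{(d)}$-ideal generated by the elements
\[
d(x)\,y - (x \succ_A y), \qquad x\,d(y) - (x \prec_A y), \qquad x,y \in X,
\]
where the right-hand sides, which are elements of $A$, are expressed in the basis $X$. Set $B := F/I$, which is an $\mathcal{A}s_1$-algebra with a derivation (inherited from $d$). The inclusion $X \hookrightarrow F \twoheadrightarrow B$ extends to a map $\psi : A \to B$ of $\Der\mathcal{A}s_1$-algebras, where $B$ is equipped with the derived operations $\star,\triangleleft$; the compatibility $\psi(a \succ b) = \psi(a) \star \psi(b)$ and $\psi(a \prec b) = \psi(a) \triangleleft \psi(b)$ holds tautologically by the choice of defining relations of $I$.

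The remaining and principal task is to prove that $\psi$ is injective, i.e.\ that $I \cap \mathrm{span}_K(X) = 0$. The natural approach is via a normal-form (Gröbner--Shirshov) argument: pick a monomial ordering on $F$ such that the leading terms of the generators of $I$ are $d(x)y$ and $x\,d(y)$, compute compositions of intersection and inclusion (using both the associator relation of $\mathcal{A}s_1$ and the Leibniz rule for $d$), and verify that every ambiguity reduces to zero modulo $I$ and the $\Der\mathcal{A}s_1$-identities already satisfied in $A$. Once this completeness is established, a PBW-type normal form for $B$ shows that the $K$-span of $X$ injects into $B$, giving the desired embedding. The main obstacle is confirming the confluence of all critical pairs; the relations originating from the symmetric identity $(\mathrm{as}_1)$ produce more compositions than the associative case, and one must check that each closes up using only the two identities of $\Der\mathcal{A}s_1$ valid in $A$, rather than producing new relations that would collapse the image of $A$.
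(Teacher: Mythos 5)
Your construction (free differential $\mathcal{A}s_1$-algebra on a basis of $A$, relations $d(x)y=x\succ y$ and $xd(y)=x\prec y$, injectivity via a rewriting argument) is a reasonable blueprint, but as written it contains a genuine gap: the injectivity of $\psi$, which is the entire content of the proposition, is only announced, never established. You correctly identify the confluence of all critical pairs as ``the main obstacle,'' and then stop there. This cannot be waved through: for a general subvariety of associative algebras the analogous embedding can fail precisely because the compositions coming from the extra defining identity do not close up, so the verification is not a routine formality but the whole proof. A secondary weak point is your claim that $(F,\star,\triangleleft)$ is a $\Der\mathcal{A}s_1$-algebra ``by the very definition'' of the white Manin product; that the derived operations on a differential $\mathcal{A}s_1$-algebra satisfy the identities of $\Nov\circ\mathcal{A}s_1$ is itself a statement that needs justification (it is part of the general theory of derived operads, not a tautology).

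The paper takes a different and much shorter route: it observes that the white Manin product coincides with the Hadamard product, $\Nov\circ\mathcal{A}s_1=\Nov\otimes\mathcal{A}s_1$, and then invokes the general results of \cite{KolMashSar} and \cite{erlagol2021}, which establish exactly this kind of embedding (of algebras over the derived operad into differential $\mathcal{P}$-algebras) once that equality of products is known. If you want to salvage your direct approach you must actually carry out the Gr\"obner--Shirshov computation for the ambiguities generated by the degree-$3$ identity of $\mathcal{A}s_1$ together with the Leibniz rule; alternatively, and more economically, prove the equality $\Nov\circ\mathcal{A}s_1=\Nov\otimes\mathcal{A}s_1$ and cite the general machinery, as the paper does.
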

\begin{proof}
Since $\Nov\circ\mathcal{A}s_1=\Nov\otimes\mathcal{A}s_1$, the result follows directly from \cite{KolMashSar} and \cite{erlagol2021}.
\end{proof}

Next, consider the operad $\pre\text{-}\mathcal{A}s_1:=\pre\text{-}\Lie \bullet \mathcal{A}s_1$, where $\bullet$ denotes the black Manin product of two quadratic operads. The corresponding dendriform operad satisfies the identity:
\begin{multline*}
(a\preceq b)\preceq c+(b\succeq a)\preceq c+(a\preceq c)\preceq b+(c\succeq a)\preceq b\\
+(b\succeq c)\succeq a+(b\preceq c)\succeq a+(c\succeq b)\succeq a+(c\preceq b)\succeq a=0.
\end{multline*}

\begin{proposition}
Define a new operation $\star$ on the operad $\pre\text{-}\mathcal{A}s_1$ by
\[a\star b=a\preceq b+a\succeq b.\]

Then the algebra $(X, \star)$ is an $\mathcal{A}s_1$-algebra. In fact, the free algebra $\pre\text{-}\mathcal{A}s_1\<X\>$ can be embedded into the algebra $\mathcal{A}s_1\<X\>$ equipped with a Rota--Baxter operator $R$, as follows:
\[
a\preceq b=aR(b)\;\;\textrm{and}\;\;a\succeq b=R(a)b.
\]
\end{proposition}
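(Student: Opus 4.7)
The proposition makes two claims: (i) that $(X,\star)$ satisfies the $\mathcal{A}s_1$-identity (\ref{as1}) on every $\pre\text{-}\mathcal{A}s_1$-algebra, and (ii) that the free algebra $\pre\text{-}\mathcal{A}s_1\<X\>$ embeds into the Rota--Baxter $\mathcal{A}s_1$-algebra on $X$ via the stated substitutions. The plan is to handle them in order: the first by direct term-matching, the second by checking that the substitutions give a homomorphism and then addressing injectivity.

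For (i), the key idea is to reduce both $\sigma(a)\star\sigma(b)\star\sigma(c)$ and every summand of the displayed eight-term defining identity of $\pre\text{-}\mathcal{A}s_1$ to a common canonical form. Using the dendriform identities (\ref{eq:LIdent3})--(\ref{eq:LIdent5}), every $\star$-triple becomes the sum of the four right-nested monomials $\sigma(a)*_1(\sigma(b)*_2\sigma(c))$ with $*_i\in\{\preceq,\succeq\}$; in particular, $\star$ is already associative. The eight-term defining identity reduces analogously: its four $\preceq\preceq$-terms expand via (\ref{eq:LIdent3}) and its four $\succeq$-terms pair up and collapse via (\ref{eq:LIdent5}), yielding an eight-term canonical-form identity $I(a,b,c)=0$. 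A direct bookkeeping check on the $24$ canonical monomials then shows
\[
\sum_{\sigma\in S_3}\sigma(a)\star\sigma(b)\star\sigma(c) = I(a,b,c)+I(b,a,c)+I(c,a,b),
\]
which establishes (\ref{as1}) for $\star$.

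For (ii), I first verify that the substitutions $a\preceq b=aR(b)$ and $a\succeq b=R(a)b$ send the defining identity of $\pre\text{-}\mathcal{A}s_1$ to zero. After using associativity to redistribute brackets, the arguments of the resulting $R$-images take the form $R(b)c+bR(c)$ or $R(c)b+cR(b)$; the Rota--Baxter axiom $R(x)R(y)=R(R(x)y+xR(y))$ collapses these to $R(b)R(c)$ and $R(c)R(b)$, and the residue is exactly the $\mathcal{A}s_1$-identity (\ref{as1}) evaluated at the triple $(a,R(b),R(c))$, which vanishes by hypothesis. This produces a homomorphism $\phi\colon\pre\text{-}\mathcal{A}s_1\<X\>\to\mathcal{A}s_1\<X\>$ equipped with $R$. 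The main obstacle is injectivity of $\phi$: the natural strategy, parallel to the classical dendriform-to-Rota--Baxter-associative embedding and to Proposition 2, is to exploit the identification $\pre\text{-}\mathcal{A}s_1=\pre\text{-}\Lie\bullet\mathcal{A}s_1$ together with a Gr\"obner--Shirshov / normal-form description of the free Rota--Baxter $\mathcal{A}s_1$-algebra, and then verify that the $\phi$-images of a basis of $\pre\text{-}\mathcal{A}s_1\<X\>$ remain linearly independent. Since $\mathcal{A}s_1$ is not Koszul, extracting a usable normal form is the most delicate point.
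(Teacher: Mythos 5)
The paper gives no proof of this proposition at all, so there is no argument of the author's to compare yours against; the statement is evidently intended as the $\mathcal{A}s_1$-analogue of the classical dendriform/Rota--Baxter facts recalled in the introduction (identities (\ref{eq:LIdent3})--(\ref{eq:LIdent5}) and the references \cite{RB2,RB1}). Your part (i) is correct and essentially complete. Since $\pre\text{-}\mathcal{A}s_1$ is the dendriform operad with one extra relation, every $\star$-triple reduces to the four right-nested monomials, the eight-term relation reduces via (\ref{eq:LIdent3})--(\ref{eq:LIdent5}) to an eight-monomial right-nested identity $I(a,b,c)=0$, and the claimed bookkeeping identity does hold: in $I(a,b,c)+I(b,a,c)+I(c,a,b)$ each of the six permutations of $(a,b,c)$ occurs with each of the four patterns of $\preceq,\succeq$ exactly once, which is precisely the $24$-monomial canonical form of $\sum_{\sigma\in S_3}\sigma(a)\star\sigma(b)\star\sigma(c)$. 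Your check that the substitution $a\preceq b=aR(b)$, $a\succeq b=R(a)b$ annihilates the eight-term relation is also right: after one application of the Rota--Baxter identity the sum becomes the full symmetrization of the word $a\,R(b)\,R(c)$, which vanishes in $\mathcal{A}s_1$.

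The gap is in part (ii): you prove only that the substitutions define a homomorphism $\phi$ from $\pre\text{-}\mathcal{A}s_1\<X\>$ to the free Rota--Baxter $\mathcal{A}s_1$-algebra, and you explicitly leave injectivity --- which is the entire content of the word ``embedded'' --- unproved. Moreover, your diagnosis that the non-Koszulity of $\mathcal{A}s_1$ is the obstruction points in the wrong direction: the Gubarev--Kolesnikov embedding of dendriform algebras into Rota--Baxter algebras \cite{RB1} proceeds by an explicit construction of the universal enveloping Rota--Baxter algebra together with a linear basis of it, and Koszulity enters nowhere. What that argument does require, and what you would actually have to supply here, is a normal form for the free Rota--Baxter $\mathcal{A}s_1$-algebra (equivalently, for the universal envelope of a $\pre\text{-}\mathcal{A}s_1$-algebra inside it) on which the images of a basis of $\pre\text{-}\mathcal{A}s_1\<X\>$ are seen to remain linearly independent; this in turn rests on a basis of the free $\mathcal{A}s_1$-algebra itself, which is nontrivial but unrelated to Koszul duality. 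As it stands, your proposal (like the paper, which offers no proof) establishes the first claim of the proposition but not the embedding.
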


\begin{proposition}\label{polarization}
The polarization of $\mathcal{A}s_1$ is given by the identities:
\[
[\{a,c\},b]=\{[a,b],c\}-\{a,[b,c]\},
\]
\[
[b,[a,c]]=\{\{a,b\},c\}-\{a,\{b,c\}\}
\]
and
\[
\{a,\{b,c\}\}+\{\{a,c\},b\}+\{\{a,b\},c\}=0.
\]
\end{proposition}
\begin{proof}
Firstly, let us rewrite each identity in $\mathcal{A}s_1$ by commutator and anti-commutator as follows:
\[
ab=1/2([a,b]+\{a,b\}).
\]
We define an order on monomials with operations $[\cdot,\cdot]$ and $\{\cdot,\cdot\}$ by
\begin{itemize}
    \item $[[\cdot,\cdot],\cdot]>[\{\cdot,\cdot\},\cdot]>\{[\cdot,\cdot],\cdot\}>\{\{\cdot,\cdot\},\cdot\}$;
    \item the remaining monomials are ordered by lexicographical order;
\end{itemize}
Let $\mathcal{R}$ be a set of identities in $\mathcal{A}s_1$ in terms of $[\cdot,\cdot]$ and $\{\cdot,\cdot\}$, i.e., we have
\begin{multline*}
(ab)c-a(bc)=1/4([[a,b],c]+[\{a,b\},c]+\{[a,b],c\}+\{\{a,b\},c\}\\
+[[b,c],a]-\{[b,c],a\}+[\{b,c\},a]-\{\{b,c\},a\}),    
\end{multline*}
\begin{multline*}
(ba)c-b(ac)=1/4(-[[a,b],c]+[\{a,b\},c]-\{[a,b],c\}+\{\{a,b\},c\}\\
+[[a,c],b]-\{[a,c],b\}+[\{a,c\},b]-\{\{a,c\},b\}),
\end{multline*}
\begin{multline*}
(ac)b-a(cb)=1/4([[a,c],b]+[\{a,c\},b]+\{[a,c],b\}+\{\{a,c\},b\}\\
-[[b,c],a]+\{[b,c],a\}+[\{b,c\},a]-\{\{b,c\},a\}),
\end{multline*}
\begin{multline*}
(ca)b-c(ab)=1/4(-[[a,c],b]+[\{a,c\},b]-\{[a,c],b\}+\{\{a,c\},b\}\\
+[[a,b],c]-\{[a,b],c\}+[\{a,b\},c]-\{\{a,b\},c\}),   
\end{multline*}
\begin{multline*}
(bc)a-b(ca)=1/4([[b,c],a]+[\{b,c\},a]+\{[b,c],a\}+\{\{b,c\},a\}\\
-[[a,c],b]-\{[a,c],b\}+[\{a,c\},b]-\{\{a,c\},b\}),   
\end{multline*}
\begin{multline*}
(cb)a-c(ba)=1/4(-[[b,c],a]+[\{b,c\},a]-\{[b,c],a\}+\{\{b,c\},a\}\\
-[[a,b],c]-\{[a,b],c\}+[\{a,b\},c]-\{\{a,b\},c\}),
\end{multline*}
\begin{multline*}
abc+bac+acb+cab+bca+cba=\\
1/2([\{a,b\},c]+\{\{a,b\},c\}
+[\{a,c\},b]+\{\{a,c\},b\}+[\{b,c\},a]+\{\{b,c\},a\}.
\end{multline*}
For the set $\mathcal{R}$, we construct a matrix $[\mathcal{R}]$ representing the identities as row vectors relative to the given order.
\[ \left(
\begin{array}{cccccccccccc}
1/4& 0& 1/4& 1/4& 0& 1/4& -1/4& 0& 1/4& -1/4& 0& 1/4\\
0& 1/4& -1/4& 0& 1/4& 1/4& 0& -1/4& -1/4& 0& -1/4& 1/4\\
-1/4& 1/4& 0& 1/4& 1/4& 0& 1/4& 1/4& 0& -1/4& 1/4& 0\\
0& -1/4& 1/4& 0& 1/4& 1/4& 0& -1/4& -1/4& 0& 1/4& -1/4\\
1/4& -1/4& 0& 1/4& 1/4& 0& 1/4& -1/4& 0& 1/4& -1/4& 0\\
-1/4& 0& -1/4& 1/4& 0& 1/4& -1/4& 0& -1/4& 1/4& 0& -1/4\\
0& 0& 0& 1/2& 1/2& 1/2& 0& 0& 0& 1/2& 1/2& 1/2
\end{array} \right)
\]

By applying Gröbner basis theory, the matrix $[\mathcal{R}]$ can be brought to an echelon form, and we obtain the result.
\end{proof}

\begin{theorem}
All identities of the algebra $\mathcal{A}s_1^{(-)}\<X\>$ up to degree $4$ follow from the anti-commutativity and Jacobi identities.
\end{theorem}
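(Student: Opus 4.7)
My plan is to reduce the theorem to a finite linear-algebra check in each multilinear degree $n\le 4$. The natural map $\Lie\<X\>\to \mathcal{A}s_1^{(-)}\<X\>$ sending the free Lie bracket to the commutator $[a,b]=ab-ba$ is automatically a surjective Lie homomorphism, so since $\dim\Lie(n)=(n-1)!$, it suffices to prove injectivity for $n=2,3,4$. Equivalently, I need to show that the Lie submodule of the free associative algebra $\As\<X\>$ meets the operadic ideal $I\subset \As$ generated by the defining relation $R_3(x,y,z)=\sum_{\sigma\in S_3}\sigma(xyz)$ trivially in these degrees.

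Degrees $2$ and $3$ are easy. In degree $2$ we have $I(2)=0$ and $[a,b]\ne 0$. In degree $3$, $I(3)=K\cdot R_3$ is spanned by the totally symmetric element $R_3$, while the two Hall basis elements $[[a,b],c]=abc-bac-cab+cba$ and $[[a,c],b]=acb-bac-cab+bca$ have mixed-sign coefficients in the permutation basis, so a short comparison of coefficients shows that no non-trivial combination of them lies in $K\cdot R_3$; hence $\Lie(3)\cap I(3)=0$.

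Degree $4$ is the main step. I would fix a Hall basis of $\Lie(4)$, for instance
\[
[[[a,b],c],d],\ [[[a,b],d],c],\ [[[a,c],b],d],\ [[[a,c],d],b],\ [[[a,d],b],c],\ [[[a,d],c],b],
\]
and expand each as a vector in $\As(4)\cong K[S_4]$. Next I would enumerate a spanning set for the multilinear degree-$4$ part $I(4)$: these are the operadic insertions of $R_3$, namely $R_3$ pre- or post-multiplied by the fourth variable, and $R_3$ with one of its three arguments replaced by a product $pq$ (in either order) of two of the four variables while the remaining two slots are filled by the other two variables singly. The theorem then reduces to verifying that the six Hall vectors remain linearly independent modulo $I(4)$ inside the $24$-dimensional space $K[S_4]$.

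The main obstacle is this degree-$4$ rank computation. Listing the $S_4$-orbits of generators of $I(4)$ without redundancy is tedious but mechanical, and the remaining verification is a single rank computation in a $24$-dimensional space, conveniently handled by a linear-algebra system. Once the rank check succeeds, the induced surjection $\Lie(n)\twoheadrightarrow \mathcal{A}s_1^{(-)}(n)$ is an isomorphism for each $n\le 4$, so every multilinear identity of $\mathcal{A}s_1^{(-)}\<X\>$ of degree at most $4$ is a consequence of anti-commutativity and Jacobi; the characteristic-zero assumption lets us recover the general case from the multilinear one by standard polarization.
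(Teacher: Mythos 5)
Your proposal is correct and follows essentially the same route as the paper: both arguments reduce the statement to checking that the images of the two degree-$3$ and six degree-$4$ Hall basis elements of the free Lie algebra remain linearly independent in $\mathcal{A}s_1\<X\>$, i.e.\ that the Lie elements meet the ideal generated by the defining identity trivially, and both dispose of the resulting finite rank computation by a routine (machine-assisted) calculation. Your write-up is somewhat more explicit about how the degree-$4$ component of the ideal is spanned, but the underlying argument is the same.
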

\begin{proof}
Let us consider the free Lie algebra denoted by $\Lie\<X\>$. The basis monomials of the multilinear component of $\Lie\<X\>$ in degrees 3 and 4 are
\[
[[a,b],c],\;[[a,c],b]
\]
and 
\[
[[[a,b],c],d],\;[[[a,b],d],c],\;[[[a,c],b],d],\;[[[a,c],d],b],\;[[[a,d],b],c],\;[[[a,d],c],b],
\]
respectively. To prove that there are no additional identities in the algebra $\mathcal{A}s_1^{(-)}\<X\>$ up to degree 4, it is sufficient to show that the following equations admit only the trivial solutions:
\[
\lambda_1[[a,b],c]+\lambda_2[[a,c],b]=0,
\]
\[
\beta_1[[[a,b],c],d]+\beta_2[[[a,b],d],c]+\beta_3[[[a,c],b],d]+\beta_4[[[a,c],d],b]+\beta_5[[[a,d],b],c]+\beta_6[[[a,d],c],b]=0.
\]
The calculations for these equations are straightforward.
\end{proof}

\begin{theorem}
An algebra $\mathcal{A}s_1^{(-)}\<X\>$ satisfies the following identity of degree $5$:
\begin{multline*}
[[[[a, b], c], d], e]-[[[[a, b], c], e], d]+[[[[a, b], d], c], e]-[[[[a,b], d], e], c]+[[[[a, b], e], c], d]\\
+[[[[a, b], e], d], c]-[[[[a, c], b], d], e]+[[[[a, c], d], b], e]-[[[[a, d], b], c], e]+[[[[a, d], c], b], e]=0.
\end{multline*}    
\end{theorem}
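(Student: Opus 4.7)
The plan is to work inside the free associative algebra $\As\<a,b,c,d,e\>$ and show that, after expanding every left-normed commutator, the degree-$5$ combination in the statement lies in the $T$-ideal generated by the defining relation
\[
P(x,y,z) \;:=\; xyz + yxz + xzy + zxy + yzx + zyx.
\]
Since $\mathcal{A}s_1\<X\>$ is by definition the quotient of $\As\<X\>$ by this $T$-ideal, any such representation immediately yields the identity in $\mathcal{A}s_1^{(-)}\<X\>$.

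The first step is the routine expansion. Each nested bracket $[[[[u_1,u_2],u_3],u_4],u_5]$ unfolds into $2^4 = 16$ signed multilinear associative monomials; summing the expansions of the ten brackets in the statement with the indicated signs produces a concrete element $I \in \As\<a,b,c,d,e\>$ of the multilinear degree-$5$ component, involving at most $160$ terms before cancellation.

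Next one identifies the relevant consequences of $P$. In multilinear degree $5$ on the alphabet $\{a,b,c,d,e\}$, the $T$-ideal generated by $P$ is spanned by products $m_1\, P(y_1,y_2,y_3)\, m_2$, where $\{y_1,y_2,y_3\}$ is a three-element subset and $m_1, m_2$ are ordered monomials in the remaining two variables with $|m_1| + |m_2| = 2$; because $P$ is totally symmetric, only the unordered triple $\{y_1,y_2,y_3\}$ matters. The task then becomes to exhibit rational coefficients $\alpha_{m_1,\{y_1,y_2,y_3\},m_2}$ such that
\[
I \;=\; \sum \alpha_{m_1,\{y_1,y_2,y_3\},m_2}\; m_1\, P(y_1,y_2,y_3)\, m_2
\]
in $\As\<a,b,c,d,e\>$, which is a finite-dimensional linear-algebra problem.

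The main obstacle will be the combinatorial bookkeeping rather than any conceptual subtlety. In practice, I would fix a basis of the multilinear degree-$5$ component of $\As\<a,b,c,d,e\>/(P)$ by choosing a Gr\"obner--Shirshov normal form for $P$ under the deglex order induced by $a < b < c < d < e$, and then verify that the normal form of $I$ equals zero. An alternative route is to exploit the $S_5$-module structure of the space of consequences of $P$ and verify the vanishing by symmetrization over a small set of orbit representatives. Either path reduces the theorem to a finite computation that, while tedious by hand, is routinely discharged by a computer algebra system.
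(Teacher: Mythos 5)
Your overall route --- expand the ten left-normed commutators inside $\As\<a,b,c,d,e\>$ and check that the resulting multilinear element $I$ lies in the T-ideal generated by $P(x,y,z)=xyz+yxz+xzy+zxy+yzx+zyx$ --- is the right reduction, and it is in the same spirit as the paper, whose entire proof consists of delegating this finite check to the computer algebra system Albert. The genuine gap is in your description of the degree-$5$ component of that T-ideal. You assert that it is spanned by the elements $m_1\,P(y_1,y_2,y_3)\,m_2$ with $y_1,y_2,y_3$ single variables and $m_1,m_2$ monomials in the remaining two letters. This omits the substitution consequences, i.e.\ the elements $m_1\,P(w_1,w_2,w_3)\,m_2$ in which some argument $w_i$ is itself a product of two or three of the variables, such as $P(ab,c,d)\,e$ or $P(abc,d,e)$. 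Such consequences are in general \emph{not} redundant: already for the identity $f(x,y)=xy+yx$, the six framed single-variable consequences $m_1 f(u,v) m_2$ span only a $5$-dimensional subspace of the $6$-dimensional multilinear degree-$3$ component (each such element is a sum of two monomials differing by a transposition, hence is annihilated by the sign character), whereas $f(xy,z)=xyz+zxy$ pairs nontrivially with the sign character and so lies outside that span, even though the full T-ideal in that degree is the whole space. Consequently, solving your linear system $I=\sum\alpha\,m_1 P(y_1,y_2,y_3)m_2$ could fail even though $I$ does belong to the T-ideal, and the method as stated could spuriously refute a true identity.

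The repair is routine: take the full set of liftings of $P$ from degree $3$ to degree $5$, namely framings by monomials \emph{together with} substitutions of monomials for the arguments of $P$; this is exactly what Albert and the operadic Gr\"{o}bner-basis packages compute internally. The same caveat applies to your Gr\"{o}bner--Shirshov variant: the normal form must be taken modulo the T-ideal, not merely the two-sided ideal generated by the multilinear degree-$3$ instances of $P$. With the consequence space corrected, your argument reduces the theorem to a finite linear-algebra verification, which is precisely what the paper itself offers (without the details you supply).
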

\begin{proof}
The result can be proved using computer algebra systems such as Albert \cite{Albert}.
\end{proof}

\begin{theorem}
All identities of the algebra $\mathcal{A}s_1^{(+)}\<X\>$ up to degree $4$ follow from from commutativity and the following identities:
\[
\{a,\{b,c\}\}+\{b,\{c,a\}\}+\{c,\{a,b\}\}=0
\]
and
\begin{multline*}
\{\{\{a,b\},c\},d\}+\{\{\{a,b\},d\},c\}+\{\{\{a,c\}, b\},d\}+\{\{\{a,c\},d\},b\}\\
+\{\{\{a,d\},b\},c\}+\{\{\{a,d\},c\},b\}=0.
\end{multline*}
\end{theorem}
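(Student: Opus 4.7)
The plan is to follow the strategy used in the earlier theorem on $\mathcal{A}s_1^{(-)}\<X\>$: first fix a spanning set of the multilinear part of the free commutative magma algebra modulo the two listed identities, then verify that its image in $\mathcal{A}s_1^{(+)}\<X\>$ under $\{x,y\}\mapsto xy+yx$ is linearly independent.

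In degree $3$ the free commutative magma has the three multilinear monomials $\{\{a,b\},c\}$, $\{\{a,c\},b\}$ and $\{\{b,c\},a\}$, and the cyclic Jacobi-like relation reduces the dimension to $2$. In degree $4$ there are fifteen multilinear bracketings, obtained from ordered placements of four variables on the two rooted binary trees with four leaves modulo commutativity at each node. Applying the cyclic relation to every three-letter subtree of each bracketing, together with the stated degree-$4$ symmetric sum identity, leaves a concrete candidate spanning set whose size can be read off from the reduction.

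For linear independence, in each degree $d\in\{3,4\}$ I would suppose a linear combination $\sum\lambda_i M_i$ of the remaining candidate monomials $M_i$ vanishes in $\mathcal{A}s_1^{(+)}\<X\>$, expand each $M_i$ in $\mathcal{A}s_1\<X\>$ via $\{u,v\}=uv+vu$, and reduce modulo the degree-$d$ consequences of the defining identity of $\mathcal{A}s_1$. The resulting finite linear system on the $\lambda_i$ should admit only the trivial solution, confirming that no identities beyond the two stated ones hold up to degree $4$.

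The main obstacle is the combinatorics in degree $4$: the defining identity of $\mathcal{A}s_1$ is already a six-term total symmetrization, so its degree-$4$ consequences form a sizeable subspace of the span of the $24$ multilinear associative monomials, and each bracket in an $M_i$ doubles the number of summands after expansion. To keep this step manageable I would delegate the final matrix computation to a computer algebra system such as \textsf{Albert}, as already done elsewhere in the paper.
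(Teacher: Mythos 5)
Your proposal follows essentially the same route as the paper: the paper likewise takes the free commutative algebra modulo the two stated identities (which it calls the free mock-Lie algebra), fixes its multilinear basis in degrees $3$ and $4$ (of sizes $2$ and $5$ respectively), and reduces the theorem to checking that a vanishing linear combination of these basis monomials in $\mathcal{A}s_1^{(+)}\<X\>$ forces all coefficients to be zero. The only difference is presentational: the paper states the reduced bases outright rather than deriving them from the fifteen degree-$4$ bracketings, and calls the final independence computation straightforward rather than delegating it to software.
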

\begin{proof}
Let us consider the free mock-Lie algebra denoted by $\Mock\Lie\<X\>$, i.e., it is a commutative algebra that satisfies the above identities. The basis monomials of the multilinear component of $\Mock\Lie\<X\>$ in degrees 3 and 4 are
\[
\{\{a,b\},c\},\;\{\{a,c\},b\}
\]
and 
\[
\{\{\{a,b\},d\},c\},\;\{\{\{a,c\}, b\},d\},\;\{\{\{a,c\},d\},b\},\;\{\{\{a,d\},b\},c\},\;\{\{\{a,d\},c\},b\},
\]
respectively. To prove that there are no additional identities in the algebra $\mathcal{A}s_1^{(+)}\<X\>$ up to degree 4, it is sufficient to show that the following equations admit only the trivial solutions:
\[
\lambda_1\{\{a,b\},c\}+\lambda_2\{\{a,c\},b\}=0,
\]
\[
\beta_1\{\{\{a,b\},d\},c\}+\beta_2\{\{\{a,c\},b\},d\}+\beta_3\{\{\{a,c\},d\},b\}+\beta_4\{\{\{a,d\},b\},c\}+\beta_5\{\{\{a,d\},c\},b\}=0.
\]
The calculations for these equations are straightforward.

\end{proof}

\begin{theorem}
An algebra $\mathcal{A}s_1^{(+)}\<X\>$ satisfies the following identity of degree $5$:
\[
\{\{\{\{a,b\},c\},d\},e\}+\{\{\{\{a,b\},d\},c\},e\}=0.
\]
\end{theorem}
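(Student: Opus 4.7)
The plan is to reduce the verification to a concrete computation in the multilinear degree-$5$ component of $\mathcal{A}s_1\<X\>$, by first compressing the iterated anti-commutators via a consequence of the defining identity. In $\mathcal{A}s_1$, the expansion $\{\{X,Y\},Z\}=XYZ+YXZ+ZXY+ZYX$ contains exactly four of the six permutations of $X,Y,Z$, so the identity $abc+bac+acb+cab+bca+cba=0$ forces
\[
\{\{X,Y\},Z\} = -(XZY + YZX)
\]
for any elements $X,Y,Z$ of such an algebra.

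Applying this formula gives the compact simplifications $\{\{a,b\},c\}=-(acb+bca)$ and $\{\{a,b\},d\}=-(adb+bda)$; a second application to the outer bracket then reduces each side of the target identity to four associative monomials:
\[
\{\{\{\{a,b\},c\},d\},e\} = acbed + bcaed + deacb + debca,
\]
\[
\{\{\{\{a,b\},d\},c\},e\} = adbec + bdaec + ceadb + cebda.
\]
The proof therefore reduces to showing that the sum of these eight associative monomials vanishes in $\mathcal{A}s_1\<X\>$, i.e.\ lies in the two-sided ideal generated by $\sum_{\sigma\in S_3}\sigma\cdot XYZ$ for triples of (possibly composite) elements $X,Y,Z$.

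The main obstacle is this final cancellation. The obvious manipulations --- grouping the eight monomials by the position of $e$ and factoring out the inner $\{a,b\}$ --- turn out to be tautological, since the sum is visibly symmetric under the swap $c\leftrightarrow d$ and simply reproduces itself upon applying the simplification lemma in reverse. Likewise, bracketing the degree-$4$ identity of the preceding theorem with $e$ on the outside produces a relation that mixes our sum only with further instances of the very identity one is trying to prove (obtained by relabeling $b$ with $c$ or $d$), so it cannot by itself close the argument. To obtain a genuine cancellation one must apply the cubic identity to triples with \emph{composite} arguments (for instance $(ac,b,ed)$ and $(de,a,cb)$), chosen so as to tie together the monomials with $e$ in position $2$ and those with $e$ in position $4$. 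Because of this delicate combinatorial bookkeeping, the cleanest way to complete the proof --- exactly as was done for the degree-$5$ identity in $\mathcal{A}s_1^{(-)}\<X\>$ of the preceding theorem --- is an automated verification with the computer algebra system Albert \cite{Albert}.
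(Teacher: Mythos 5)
Your reduction of the nested anti-commutators via $\{\{X,Y\},Z\}=-(XZY+YZX)$ is a correct consequence of the defining identity of $\mathcal{A}s_1$ (the anti-commutator expansion contains exactly the four permutations complementary to $XZY$ and $YZX$), and your concluding appeal to Albert for the residual T-ideal membership of the eight-monomial sum is exactly the paper's own one-line proof. So the argument is sound and essentially the same computer-assisted approach as the paper, with a valid but inessential preliminary hand simplification.
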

\begin{proof}
The result can be proved using computer algebra systems such as Albert \cite{Albert}.
\end{proof}

\section{Second-type associative algebras}

\begin{definition}
An algebra is called a second-type associative algebra if it satisfies the identity:
$$abc-bac-acb+cab+bca-cba=0.$$
\end{definition}

Let us denote by $\mathcal{A}s_2$ the variety of second-type associative algebras. One motivation for considering algebras in $\mathcal{A}s_2$ is the following result:
\begin{proposition}
\cite{assos1} The dual operad of $\mathcal{A}s_2$ is the assosymmetric operad.
\end{proposition}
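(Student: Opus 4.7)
The plan is to apply the standard Koszul duality recipe for binary quadratic operads, paralleling the $\mathcal{A}s_1$-alternative case. I would view $\mathcal{A}s_2$ as a quotient of the free operad $\mathcal{F}$ on a single non-symmetric binary generator---whose arity-$3$ component is $12$-dimensional---by the relation subspace $R\subset\mathcal{F}(3)$ generated by associativity together with the extra defining identity.

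First, under the identification $\As(3)\cong K[S_3]$ via $\sigma\leftrightarrow\sigma(a)\sigma(b)\sigma(c)$, I would observe that the extra identity $abc-bac-acb+cab+bca-cba$ is precisely the sign antisymmetrizer $\sum_{\sigma\in S_3}\mathrm{sgn}(\sigma)\sigma$, which spans the one-dimensional sign $S_3$-submodule. Hence $\dim R = 6+1 = 7$ and $\dim\mathcal{A}s_2(3)=5$, so by duality $\dim \mathcal{A}s_2^{!}(3)=7$, matching the known arity-$3$ dimension of the assosymmetric operad (whose relations $(a,b,c)=(b,a,c)$ and $(a,b,c)=(a,c,b)$ with $(a,b,c):=(ab)c-a(bc)$ force full $S_3$-invariance of the associator, contributing five independent relations on the $12$-dimensional free space).

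Next, I would compute the orthogonal complement $R^{\perp}\subset\mathcal{F}(3)$ under the standard sign-twisted Koszul pairing on tree-monomials and check that it coincides with the $S_3$-span of the two assosymmetric relations. Since every associator is itself a linear combination of the associativity generators of $R$, and the sign antisymmetrizer pairs trivially against any $S_3$-symmetric combination of associators, the inclusion of the assosymmetric relations into $R^{\perp}$ should be automatic; the reverse inclusion then follows from the dimension count.

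The hard part will be the explicit sign bookkeeping in the Koszul pairing, together with verifying that the $S_3$-orbits of the two assosymmetric relations are linearly independent and span the full $7$-dimensional $R^{\perp}$, rather than a proper subspace. A cleaner alternative is to argue via character theory---decomposing $\mathcal{F}(3)$ into $S_3$-isotypic components and matching characters of $R$ and $R^{\perp}$ with those expected from $\mathcal{A}s_2(3)$ and the assosymmetric arity-$3$ module---or to invoke \cite{assos1} directly, where essentially this calculation is carried out.
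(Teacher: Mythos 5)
The paper does not actually prove this proposition in-text---it is quoted from \cite{assos1}---so the only internal model to compare against is the proof of the analogous statement for $\mathcal{A}s_3$, where the dual operad is identified via the Lie-admissibility of $S\otimes U$ under the commutator rather than via an orthogonal-complement computation. Your route (compute $R^{\perp}$ for the sign-twisted Koszul pairing) is the standard equivalent recipe, and the bookkeeping is right as far as it goes: the extra identity is indeed the sign antisymmetrizer in $\As(3)\cong K[S_3]$, so $\dim R=6+1=7$, $\dim\mathcal{A}s_2(3)=5$, and $\dim\mathcal{A}s_2^{!}(3)=7$, which matches the assosymmetric operad.

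There is, however, one concrete problem: the single justification you offer for the key inclusion is stated backwards. Since $R_{\As}^{\perp}=R_{\As}$ (this is $\As^{!}=\As$), one has $R^{\perp}=R_{\As}\cap L^{\perp}$, where $L$ is a lift of the antisymmetrizer, so everything reduces to the linear functional $\langle L,-\rangle$ on the $6$-dimensional associator space. With the standard sign conventions every associator $(\sigma a,\sigma b,\sigma c)$ pairs with $L$ to the \emph{same nonzero constant}; consequently $L^{\perp}\cap R_{\As}$ consists exactly of the differences $(\sigma a,\sigma b,\sigma c)-(\tau a,\tau b,\tau c)$, i.e.\ the assosymmetric relations, while the fully symmetrized associator pairs \emph{nontrivially} with $L$. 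Your claim that ``the sign antisymmetrizer pairs trivially against any $S_3$-symmetric combination of associators'' asserts the opposite: if it held, the symmetrized associator would lie in $R^{\perp}$, and by the dimension count $R^{\perp}$ could not then be the span of the assosymmetric relations. The fix is a short computation, but note that the sign twist in the pairing is essential here---dropping the $\mathrm{sgn}(\sigma)$ factor would make the associator of the dual totally antisymmetric instead of totally symmetric. The remaining steps (linear independence of the $S_3$-orbit of the two assosymmetric relations and the dimension match) are fine.
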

In \cite{assos1}, it was proved that an operad $\mathcal{A}s_2$ is not Koszul.

Let us list the defining identities of the operad $\Der\mathcal{A}s_2:=\Nov\circ\mathcal{A}s_2$. Direct calculations give a noncommutative Novikov operad with the following identities:
\begin{multline*}
(a\succ b)\succ c-a\succ (b\succ c)-(a\succ c)\succ b+a\succ (c\succ b)-(b\prec a)\succ c+b\succ (a\succ c)\\
+b\prec(c\prec a)-(b\prec c)\prec a+(c\prec a)\succ b-c\succ (a\succ b)-c\prec(b\prec a)+(c\prec b)\prec a=0
\end{multline*}
and
\[
a\succ(b\succ c)-(a\succ c)\prec b-b\succ(a\succ c)+(b\succ c)\prec a+(c\prec a)\prec b-(c\prec b)\prec a=0.
\]

\begin{proposition}
Any algebra in the variety $\Der\mathcal{A}s_2$ can be embedded into an appropriate algebra from the variety $\mathcal{A}s_2^{(d)}$.
\end{proposition}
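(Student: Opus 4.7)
The plan is to follow exactly the template of the analogous proposition in Section~2. There, the embedding $\Der\mathcal{A}s_1\hookrightarrow\mathcal{A}s_1^{(d)}$ was reduced, via \cite{KolMashSar} together with the noncommutative Novikov realization of \cite{erlagol2021}, to the identification of the white Manin product $\Nov\circ\mathcal{A}s_1$ with the Hadamard (tensor) product $\Nov\otimes\mathcal{A}s_1$. My proposal is to establish the same identification for $\mathcal{A}s_2$ in place of $\mathcal{A}s_1$ and then invoke the same two results.

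First I would verify that $\Nov\circ\mathcal{A}s_2=\Nov\otimes\mathcal{A}s_2$ as quadratic operads. Both have the same generating space (two binary operations $\succ,\prec$), so the comparison reduces to the spaces of quadratic relations in arity $3$. The inclusion of the white product into the Hadamard product is automatic. For the reverse inclusion, I would check that the two explicit identities displayed just above the statement are exactly what one obtains by applying a derivation $d$ twice to the defining identity of $\mathcal{A}s_2$ under the substitution $a\succ b=d(a)b$, $a\prec b=ad(b)$. This shows that they already hold in $\Nov\otimes\mathcal{A}s_2$; a dimension count in arity $3$ should then confirm that they generate the full relation space on the Hadamard side.

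Once the white-equals-Hadamard identification is in place, I would appeal to the general construction of \cite{KolMashSar}: any $\Der\mathcal{A}s_2$-algebra $(A,\succ,\prec)$ embeds into the free $\mathcal{A}s_2$-algebra on the generating set of formal iterated derivatives $\{d^{k}a : a\in A,\ k\ge 0\}$, equipped with the formal derivation $d^{k}a\mapsto d^{k+1}a$, and with the original operations recovered by $a\succ b=d(a)b$, $a\prec b=ad(b)$. The identification above is precisely what guarantees that no relations beyond those of $\Der\mathcal{A}s_2$ are forced on the operations $\succ,\prec$, so the canonical map is injective.

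The main obstacle I anticipate is precisely the verification of $\Nov\circ\mathcal{A}s_2=\Nov\otimes\mathcal{A}s_2$: white products in general embed properly into Hadamard products, and equality has to be established case by case. The underlying computation is a finite linear-algebra comparison in arity $3$, but it is the substantive step; everything afterwards is a formal application of the framework of \cite{KolMashSar,erlagol2021}.
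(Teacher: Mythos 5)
Your proposal follows exactly the paper's own argument: the paper's proof is the one-line observation that $\Nov\circ\mathcal{A}s_2=\Nov\otimes\mathcal{A}s_2$, after which the embedding follows from \cite{KolMashSar} and \cite{erlagol2021}. You simply flesh out the arity-$3$ verification of that identification and the iterated-derivatives construction, both of which are implicit in the paper's citation of those two references.
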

\begin{proof}
Since $\Nov \circ \mathcal{A}s_2 = \Nov \otimes \mathcal{A}s_2$, the result follows directly from \cite{KolMashSar} and \cite{erlagol2021}.
\end{proof}

Let us compute the black Manin product $\pre\text{-}\mathcal{A}s_2:=\pre\text{-}\Lie \bullet \mathcal{A}s_2$, and we obtain a dendriform operad with the following identity:
\begin{multline*}
(a\preceq b)\preceq c-(b\succeq a)\preceq c-(a\preceq c)\preceq b+(c\succeq a)\preceq b\\
+(b\succeq c)\succeq a+(b\preceq c)\succeq a-(c\succeq b)\succeq a-(c\preceq b)\succeq a=0.
\end{multline*}

\begin{proposition}
Let us define a new operation $\star$ on operad $\pre\text{-}\mathcal{A}s_2$ as follows:
\[a\star b=a\preceq b+a\succeq b.\]
Then an algebra $(X,\star)$ is $\mathcal{A}s_2$. Moreover, the free algebra $\pre\text{-}\mathcal{A}s_2\<X\>$ can be embedded into the algebra $\mathcal{A}s_2\<X\>$ equipped with a Rota–Baxter operator $R$ as follows:
\[
a\preceq b=aR(b)\;\;\textrm{and}\;\;a\succeq b=R(a)b.
\]
\end{proposition}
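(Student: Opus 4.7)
The statement splits into two assertions that I would handle in turn. For the first, that $(X, \star)$ with $\star = \preceq + \succeq$ is an $\mathcal{A}s_2$-algebra, I would begin by observing that $\pre\text{-}\mathcal{A}s_2 = \pre\text{-}\Lie \bullet \mathcal{A}s_2$ inherits the three dendriform axioms (\ref{eq:LIdent3})--(\ref{eq:LIdent5}) from the $\pre\text{-}\Lie$ factor. A short expansion then shows that $\star$ is associative on any dendriform algebra: $(a\star b)\star c$ splits into the four $\preceq/\succeq$ summands which, by term-by-term application of the dendriform axioms, reassemble into $a\star(b\star c)$. Hence the triple product $a\star b\star c$ is a well-defined linear combination of four monomials in $\preceq,\succeq$, and the six-term $\mathcal{A}s_2$ polynomial in $\star$ becomes a twenty-four-term expression in these two operations. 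It remains to verify that this expression vanishes as a consequence of the extra $\pre\text{-}\mathcal{A}s_2$ identity displayed immediately before the proposition. By the construction of the black Manin product this vanishing is forced in principle; I would carry out the grouping of terms to exhibit it concretely, organising them by ``shape'' so that each group matches an $S_3$-symmetrization of the defining pre-identity.

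For the embedding part, the plan is to mimic the standard Loday-type construction. Let $A$ be an $\mathcal{A}s_2$-algebra equipped with a Rota--Baxter operator $R$ of weight zero, so that $R(a)R(b) = R(R(a)b) + R(aR(b))$. Setting $a\preceq b = aR(b)$ and $a\succeq b = R(a)b$, a direct substitution turns the Rota--Baxter relation into the three dendriform axioms. I would then verify that the additional $\pre\text{-}\mathcal{A}s_2$ identity reduces, after this substitution, to the $\mathcal{A}s_2$ identity of $A$ applied to appropriate triples drawn from $\{a,b,c,R(a),R(b),R(c)\}$. This yields a morphism from $\pre\text{-}\mathcal{A}s_2\<X\>$ into the free $\mathcal{A}s_2$-algebra on $X$ equipped with a Rota--Baxter operator, sending generators to generators. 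The embedding claim is precisely the injectivity of this morphism.

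The main obstacle is this injectivity step: one must ensure that no spurious relations between $\preceq$ and $\succeq$ are forced inside the free Rota--Baxter $\mathcal{A}s_2$-algebra. A direct normal-form argument at the level of free Rota--Baxter algebras is technically heavy, so I would prefer to invoke the general embedding framework for operads of the form $\pre\text{-}\Lie \bullet \mathcal{P}$, which applies since $\mathcal{A}s_2$ is a binary quadratic operad and the base field has characteristic zero. The same references used for the analogous $\pre\text{-}\mathcal{A}s_1$ proposition above cover the present case almost verbatim; I would cite them and check that their hypotheses are met rather than reconstruct the Gröbner--Shirshov argument from scratch.
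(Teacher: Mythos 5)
Your proposal is correct and follows exactly the route the paper itself (implicitly) takes: the paper states this proposition with no proof at all, treating it as a direct instance of the general splitting/black-Manin-product machinery of \cite{RB2} together with the Gubarev--Kolesnikov embedding theorem \cite{RB1}, both already cited in the introduction for the dendriform case. Your outline --- associativity of $\star$ from the three dendriform axioms, the extra six-term $\mathcal{A}s_2$ identity forced by the construction of $\pre\text{-}\Lie\bullet\mathcal{A}s_2$ as the bisuccessor of $\mathcal{A}s_2$, the Rota--Baxter substitution $a\preceq b=aR(b)$, $a\succeq b=R(a)b$, and injectivity via the general embedding of a pre-$\Var$ algebra into its universal enveloping Rota--Baxter $\Var$-algebra --- is precisely that argument made explicit.
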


\begin{proposition}
The polarization of $\mathcal{A}s_2$ is given by the identities:
\[
[\{a,c\},b]=\{[a,b],c\}-\{a,[b,c]\},
\]
\[
[b,[a,c]]=\{\{a,b\},c\}-\{a,\{b,c\}\}
\]
and
\[
\{[a,b],c\}+\{[b,c],a\}+\{[c,a],b\}=0.
\]
\end{proposition}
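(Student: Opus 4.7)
The plan is to polarize both defining relations of $\mathcal{A}s_2$ by substituting $xy=[x,y]-\{x,y\}$ and collecting terms. Since any second-type associative algebra is associative, the polarization of associativity recalled in the introduction from \cite{MarklRemm} already yields the first two identities in the statement, so the only remaining task is to polarize the degree-three identity
\[
f(a,b,c):=abc-bac-acb+cab+bca-cba=0.
\]

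Rather than grinding through the full expansion, I would exploit the observation that $f$ is the sign-antisymmetrization of $abc$, so it lies in the sign-isotypic component of the $S_3$-action on degree-three multilinear monomials. After substitution, every term becomes a linear combination of the four polarized monomial types $[[,],\cdot]$, $\{[,],\cdot\}$, $[\{,\},\cdot]$, $\{\{,\},\cdot\}$, and only those whose sign-antisymmetrization is nonzero can contribute. The last two types each contain a symmetric pair fixed by a transposition of sign $-1$, so their sign-antisymmetrizations vanish identically. The first two antisymmetrize respectively to the cyclic sums $[[a,b],c]+[[b,c],a]+[[c,a],b]$ and $\{[a,b],c\}+\{[b,c],a\}+\{[c,a],b\}$.

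Putting these together with the signs coming from $xy=[x,y]-\{x,y\}$, the polarization of $f$ reduces (up to a common factor) to the difference of these two cyclic sums. The first is the Jacobi identity, which is automatic in any associative algebra and hence already a consequence of the two polarized associativity identities, so it drops out and leaves exactly the third identity. The main obstacle in the plan is careful sign bookkeeping in the $S_3$-sum, which the isotypic reduction is designed to minimize. I would then close the argument by checking the converse: substituting $[x,y]=\tfrac12(xy-yx)$ and $\{x,y\}=-\tfrac12(xy+yx)$ back into the three listed identities must recover associativity and the $\mathcal{A}s_2$-identity, which is a direct verification.
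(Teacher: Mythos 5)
Your proposal is correct and amounts to the ``direct computation'' the paper invokes: the first two identities are just the Markl--Remm polarization of associativity, and since the $\mathcal{A}s_2$-identity is the full sign-antisymmetrization of $abc$, substituting $xy=[x,y]-\{x,y\}$ kills the $[\{\,,\},\cdot]$ and $\{\{\,,\},\cdot\}$ components and leaves $2\bigl([[a,b],c]+[[b,c],a]+[[c,a],b]\bigr)-2\bigl(\{[a,b],c\}+\{[b,c],a\}+\{[c,a],b\}\bigr)$, whose Jacobi part is already a consequence of the first two identities. Your isotypic shortcut and the converse substitution check are a clean way to organize exactly this computation, so the argument goes through as stated.
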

\begin{proof}
The result can be proved in the same manner as in Proposition \ref{polarization}. The main difference lies in the fact that, starting from the identity
\begin{multline*}
abc-bac-acb+cab+bca-cba=2[[a,b],c]+2\{[a,b],c\}-2[[a,c],b]\\
-2\{[a,c],b\}+2[[b,c],a]+2\{[b,c],a\},
\end{multline*}
the last row of the matrix $[R]$ is replaced by the vector
$$(1/2,-1/2,1/2,0,0,0,1/2,-1/2,1/2).$$
\end{proof}

\begin{theorem}
All identities of the algebra $\mathcal{A}s_2^{(-)}\<X\>$ up to degree $4$ follow from anti-commutativity, the Jacobi identity, and the metabelian identity, which is
\[
[[[a,b],[c,d]]=0.
\]
\end{theorem}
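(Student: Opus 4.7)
The plan is to mirror the proof of the $\mathcal{A}s_1^{(-)}$ analogue, replacing the free Lie algebra by the free metabelian Lie algebra $\M\Lie\<X\>$. First I would check that $\mathcal{A}s_2^{(-)}\<X\>$ actually satisfies the metabelian identity $[[[a,b],[c,d]]=0$. A convenient way is to reformulate the defining identity of $\mathcal{A}s_2$ as
\[
[a,b]\,c = [a,c]\,b - [b,c]\,a,
\]
which holds in every $\mathcal{A}s_2$-algebra. Applying this reformulation to both $[a,b][c,d]$ and $[c,d][a,b]$, then expanding the resulting $[x,[y,z]]$ terms by Jacobi and collecting, should yield $[[a,b],[c,d]]=0$. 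Equivalently, one can expand $[[a,b],[c,d]]$ into a signed sum of eight permutations of $abcd$ in the free associative algebra and verify by inspection that the sum lies in the $T$-ideal generated by the $\mathcal{A}s_2$-identity.

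Second, I would write down explicit bases of the multilinear components of $\M\Lie\<X\>$ in degrees $3$ and $4$. The metabelian identity is vacuous in degree $3$, so the basis coincides with the one for the free Lie algebra, namely $[[a,b],c]$ and $[[a,c],b]$. In degree $4$, applying $[[e,f],[g,h]]=0$ (equivalently $[[[e,f],g],h]=[[[e,f],h],g]$ after Jacobi) to each of the three ways of splitting $\{a,b,c,d\}$ into two pairs yields three independent relations on the six classical free-Lie basis monomials, collapsing the basis to
\[
[[[a,b],c],d],\quad [[[a,c],b],d],\quad [[[a,d],b],c].
\]

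Third, exactly as in the analogous proofs for $\mathcal{A}s_1^{(-)}\<X\>$, it remains to verify that the equations
\[
\lambda_1[[a,b],c]+\lambda_2[[a,c],b]=0
\]
and
\[
\beta_1[[[a,b],c],d]+\beta_2[[[a,c],b],d]+\beta_3[[[a,d],b],c]=0
\]
admit only the trivial solutions in $\mathcal{A}s_2^{(-)}\<X\>$. This is a mechanical check: expand each commutator into a signed sum of multilinear monomials in the free associative algebra and reduce modulo the $T$-ideal of the $\mathcal{A}s_2$-identity, then solve the resulting linear systems for the $\lambda_i$ and $\beta_j$.

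The main obstacle I anticipate is Step 1: establishing the metabelian identity in $\mathcal{A}s_2^{(-)}\<X\>$ by a transparent sequence of rewrites, since the reformulated $\mathcal{A}s_2$-identity interacts non-trivially with the Jacobi expansions. The degree-$3$ and degree-$4$ independence checks in Step 3, by contrast, are routine and follow the template already used in the paper.
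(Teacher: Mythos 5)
Your proposal is correct and follows essentially the same route as the paper: verify that $\mathcal{A}s_2^{(-)}\<X\>$ satisfies the metabelian identity, exhibit the degree-$3$ and degree-$4$ multilinear bases of the free metabelian Lie algebra (your three degree-$4$ monomials agree with the paper's up to sign), and check that the corresponding linear combinations vanish only trivially. The only difference is that the paper delegates the verification of the identities to the computer algebra system Albert, whereas you sketch a by-hand derivation via the rewriting $[a,b]c=[a,c]b-[b,c]a$, which is a legitimate (if more laborious) substitute.
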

\begin{proof}
Verification of the listed identities can be performed using computer algebra systems such as Albert \cite{Albert}.

Let us denote by $\M\Lie\<X\>$ a free metabelian Lie algebra. The basis monomials of the multilinear component of $\M\Lie\<X\>$ in degrees 3 and 4 are
\[
[[a,b],c],\;[[a,c],b]
\]
and 
\[
[[[b,a],c],d],\;[[[c,a],b],d],\;[[[d,a],b],c],
\]
respectively. To prove that there are no additional identities in the algebra $\mathcal{A}s_2^{(-)}\<X\>$ up to degree 4, it is sufficient to show that the following equations admit only the trivial solutions:
\[
\lambda_1[[a,b],c]+\lambda_2[[a,c],b]=0,
\]
\[
\beta_1[[[a,b],c],d]+\beta_2[[[a,c],b],d]+\beta_3[[[a,d],b],c]=0.
\]
The calculations for these equations are straightforward.

\end{proof}

\begin{theorem}
All identities of algebra $\mathcal{A}s_2^{(+)}\<X\>$ up to degree $4$ follow from commutativity and the following identities:
\begin{multline*}
\{\{\{a,b\},c\},d\}+\{\{\{b,d\},c\},a\}+\{\{\{d,a\},c\},b\}\\
-\{\{a,b\},\{c,d\}\}-\{\{a,c\},\{b,d\}\}-\{\{a,d\},\{b,c\}\}=0,
\end{multline*}
\begin{multline*}
\{\{\{a,b\},c\},d\}-\{\{\{a,b\},d\},c\}-\{\{\{a,c\}, b\},d\}+\{\{\{a,c\},d\},b\}\\
+\{\{\{a,d\},b\},c\}-\{\{\{a,d\},c\},b\}=0.
\end{multline*}
\end{theorem}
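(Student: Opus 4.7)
The plan is to mirror the proof of the analogous statement for $\mathcal{A}s_1^{(+)}$. As a first step, I would verify that the two displayed identities hold in $\mathcal{A}s_2^{(+)}\<X\>$: expanding each anti-commutator as $\{x,y\}=xy+yx$ in the free associative algebra and reducing modulo all $S_3$-permutations of the defining identity of $\mathcal{A}s_2$ should yield $0$. This is a direct symbolic computation and is readily checked with a computer algebra system such as Albert \cite{Albert}.

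Next, I would denote by $\mathcal{F}\<X\>$ the free commutative nonassociative algebra modulo the two displayed identities and determine a basis of its multilinear component in degrees $3$ and $4$. In degree $3$ the basis coincides with that of the free commutative magma, namely $\{\{a,b\},c\}$ and $\{\{a,c\},b\}$, since both identities are of degree $4$. In degree $4$ the free commutative magma has $15$ multilinear monomials: $12$ comb-shape monomials $T(x_1,x_2,x_3,x_4):=\{\{\{x_1,x_2\},x_3\},x_4\}$ (with the innermost pair unordered), together with the $3$ balanced monomials $\{\{x_1,x_2\},\{x_3,x_4\}\}$. The second displayed identity is precisely the antisymmetrization of $T$ over the symmetric group acting on the last three slots, and taking its four $S_4$-images (indexed by which variable sits in the innermost pair) substantially cuts down the comb subspace; the first identity then couples the surviving combs to the balanced monomials. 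Gaussian elimination produces an explicit basis, which I would list.

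Finally, to rule out further identities in $\mathcal{A}s_2^{(+)}\<X\>$, I would set a generic linear combination of those basis monomials equal to zero in $\mathcal{A}s_2^{(+)}\<X\>$, expand each anti-commutator, and collect coefficients on the multilinear associative words; as in the preceding theorems, the resulting linear system on the coefficients should admit only the trivial solution. I expect the main obstacle to be the combinatorial bookkeeping of the second step---carefully orbiting the two identities under $S_4$, identifying redundancies among the four $S_4$-images of each identity, and reducing the full linear system to a consistent basis---since the first and third steps are essentially mechanical and covered by Albert.
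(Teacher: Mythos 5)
Your proposal follows essentially the same route as the paper: verify the two identities by computer algebra, build the free commutative algebra modulo them, exhibit a multilinear basis in degrees $3$ and $4$, and show a generic linear combination vanishing in $\mathcal{A}s_2^{(+)}\<X\>$ must be trivial. The only cosmetic difference is that the paper recognizes the first displayed identity as the full linearization of the Jordan identity and so phrases the auxiliary object as the free Jordan algebra with the second identity adjoined (equivalent to your quotient in characteristic $0$), arriving at a $10$-element degree-$4$ basis.
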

\begin{proof}
Verification of the listed identities can be performed using computer algebra systems such as Albert \cite{Albert}.

Let us denote by $\mathcal{S}\mathcal{A}s_2^{(+)}\<X\>$ a free Jordan algebra with additional identity
\begin{multline*}
\{\{\{a,b\},c\},d\}-\{\{\{a,b\},d\},c\}-\{\{\{a,c\}, b\},d\}+\{\{\{a,c\},d\},b\}\\
+\{\{\{a,d\},b\},c\}-\{\{\{a,d\},c\},b\}=0.
\end{multline*}
The basis monomials of the multilinear component of $\mathcal{S}\mathcal{A}s_2^{(+)}\<X\>$ in degrees 3 and 4 are
\[
\{\{a,b\},c\},\;\{\{b,c\},a\},\{\{a,c\},b\}
\]
and 
\[
\{\{\{a,b\},c\},d\},\;\{\{\{a,b\},d\},c\},\;\{\{\{a,c\},b\},d\},\;\{\{\{a,c\},d\},b\},\;\{\{\{a,d\},c\},b\},
\]
\[
\{\{\{a,b\},\{c,d\}\},\;\{\{\{a,c\},\{b,d\}\},\;\{\{\{a,d\},\{b,c\}\},\;\{\{a,\{b,d\}\},c\},\;\{\{a,\{b,c\}\},d\},
\]
respectively. To prove that there are no additional identities in the algebra $\mathcal{A}s_2^{(+)}\<X\>$ up to degree 4, it is sufficient to show that the following equations admit only the trivial solutions:
\[
\lambda_1\{\{a,b\},c\}+\lambda_2\{\{b,c\},a\}+\lambda_3\{\{a,c\},b\}=0,
\]
\begin{multline*}
\beta_1\{\{\{a,b\},c\},d\}+\beta_2\{\{\{a,b\},d\},c\}+\beta_3\{\{\{a,c\},b\},d\}+\beta_4\{\{\{a,c\},d\},b\}\\+\beta_5\{\{\{a,d\},c\},b\}
+\beta_6\{\{\{a,b\},\{c,d\}\}+\beta_7\{\{\{a,c\},\{b,d\}\}+\beta_8\{\{\{a,d\},\{b,c\}\}\\
+\beta_9\{\{a,\{b,d\}\},c\}+\beta_10\{\{a,\{b,c\}\},d\}=0. 
\end{multline*}
The calculations for these equations are straightforward.

\end{proof}

\section{Third-type associative algebras}

\begin{definition}
An algebra is called a third-type associative algebra if it satisfies the following identity:
$$abc+bac-bca-cba=0.$$
\end{definition}

Let us denote by $\mathcal{A}s_3$ the variety of third-type associative algebras.

\begin{proposition}
The dual operad of $\mathcal{A}s_3$ is a left-alternative operad with identity
\begin{equation}\label{sym}
(x,y,z)+(y,z,x)+(z,x,y)=0,    
\end{equation}
where $(x,y,z)$ stands for associator.
\end{proposition}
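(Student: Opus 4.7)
The plan is to compute the Koszul dual of $\mathcal{A}s_3$ directly from the defining relations. I would first present $\mathcal{A}s_3$ as a quadratic operad $F(V)/R$, where $V$ is the regular $S_2$-representation on a single non-symmetric binary operation and $R\subset F(V)(3)$ is the $S_3$-submodule generated by the associators $(x,y,z)=(xy)z-x(yz)$ (a $6$-dimensional summand isomorphic to $K[S_3]$) together with a lift of the identity $abc+bac-bca-cba=0$ (a $2$-dimensional standard-representation summand). Since $\dim F(V)(3)=12$, the Koszul dual $\mathcal{A}s_3^!$ has a $4$-dimensional space of relations $R^\perp\subset F(V)(3)$.

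I would then exhibit the two candidate generators of $R^\perp$: the left-alternative identity $(x,y,z)+(y,x,z)$ and the cyclic associator identity $(x,y,z)+(y,z,x)+(z,x,y)$. Both are sums of associators and so lie in the associator submodule, which is Lagrangian under the standard Koszul pairing (this is just the self-duality $\As^!=\As$). Orthogonality to the associator relations is therefore automatic, and orthogonality to the $S_3$-orbit of the chosen lift of the (as3) identity is a direct computation on the $12$-element monomial basis of $F(V)(3)$.

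To close the identification I would match dimensions. The $S_3$-orbit of the left-alternative generator is $3$-dimensional, decomposing as trivial plus standard, and the orbit of the cyclic generator is $2$-dimensional, decomposing as trivial plus sign. Their trivial summands coincide, both being the totally symmetric sum of all six associators, so the sum of the two submodules has dimension $3+2-1=4=\dim R^\perp$, so it fills out $R^\perp$. Hence the dual operad is precisely the left-alternative operad with the additional cyclic associator identity.

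The main obstacle is a bookkeeping one: fixing the sign convention on the Koszul pairing so that $R_{\As}$ is self-orthogonal, equivalently so that the associator $(xy)z-x(yz)$ is a null vector. With that convention in place the required pairings collapse to short signed sums that can be checked either by hand or with a computer algebra system such as Albert.
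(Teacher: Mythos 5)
Your argument is correct, but it takes a genuinely different route from the paper's. The paper uses the tensor-product (Lie-admissibility) characterization of the Koszul dual: for $S$ a generic $\mathcal{A}s_3$-algebra and $U$ an algebra with one binary operation, it expands the Jacobi sum for the commutator on $S\otimes U$, rewrites the $S$-factors in the four-element monomial basis $\{abc,\,acb,\,bac,\,bca\}$ of $\mathcal{A}s_3(3)$, and reads off the defining identities of $\mathcal{A}s_3^{\,!}$ as the tensor coefficients that must vanish. You instead work straight from the definition $\mathcal{A}s_3^{\,!}=F(V)/(R^\perp)$: you verify that the two proposed generators lie in $R^\perp$ (orthogonality to the associativity relations being automatic since both are sums of associators and the associator submodule is totally isotropic, orthogonality to a lift of (\ref{as3}) being a short signed computation), and then you use the $S_3$-module decompositions of the two cyclic submodules (trivial plus standard for the left-alternative generator, trivial plus sign for the cyclic one, overlapping exactly in the trivial summand) to conclude that their sum is all of the $4$-dimensional space $R^\perp$. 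The paper's method produces the dual identities without having to guess them, but leaves implicit the final step of recognizing the four extracted relations as left-alternativity together with the cyclic associator identity; your method takes the answer as input, but the representation-theoretic dimension count gives a clean certificate that the two identities generate everything. Each approach defers a comparable finite computation (your pairing against the orbit of (\ref{as3}); the paper's expansion and collection of terms), so neither is more complete than the other. One point worth making explicit in your write-up: a lift of (\ref{as3}) to $F(V)(3)$ is only determined modulo the associator submodule, but since your candidate relations are already orthogonal to that submodule, the pairing against any choice of lift is well defined, and your orthogonality check is therefore unambiguous.
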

In \cite{-1-1}, left-alternative algebra with such an identity is called $(-1,1)$-algebra.
\begin{proof}
Firstly, let us fix a multilinear basis of algebra $\mathcal{A}s_3$ of degree $3$. These monomials are $abc$, $acb$, $bac$ and $bca$, and the remaining monomials can be written as a linear combination as follows:  
$$cab=bac+abc-acb$$
and
$$cba=abc+bac-bca.$$

The Lie-admissibility condition for $S\otimes U$ gives the defining identities of the operad $\mathcal{A}s_3^!$, where $S$ is a third-type associative algebra.
Then
\begin{multline*}
[[a\otimes u,b\otimes v],c\otimes w]=(ab)c\otimes (uv)w-(ba)c\otimes (vu)w-
c(ab)\otimes w(uv)+c(ba)\otimes w(vu)=\\
abc\otimes (uv)w-bac\otimes (vu)w-
(bac+abc-acb)\otimes w(uv)+(abc+bac-bca)\otimes w(vu),
\end{multline*}
\begin{multline*}
[[b\otimes v,c\otimes w],a\otimes u]=(bc)a\otimes (vw)u-(cb)a\otimes (wv)u-
a(bc)\otimes u(vw)+a(cb)\otimes u(wv)=\\
bca\otimes (vw)u-(abc+bac-bca)\otimes (wv)u-
abc\otimes u(vw)+acb\otimes u(wv)
\end{multline*}
and
\begin{multline*}
[[c\otimes w,a\otimes u],b\otimes v]=(ca)b\otimes (wu)v-(ac)b\otimes (uw)v-
b(ca)\otimes v(wu)+b(ac)\otimes v(uw)=\\
(bac+abc-acb)\otimes (wu)v-acb\otimes (uw)v-
bca\otimes v(wu)+bac\otimes v(uw).
\end{multline*}
Calculating the sum and collecting the same basis monomials, we obtain
\begin{multline*}
[[a\otimes u,b\otimes v],c\otimes w]+[[b\otimes v,c\otimes w],a\otimes u]+[[c\otimes w,a\otimes u],b\otimes v]=\\
abc\otimes((uv)w-w(uv)+w(vu)-u(vw)-(wv)u+(wu)v)\\
+acb\otimes(w(uv)+u(wv)-(uw)v-(wu)v)\\
+bac\otimes(-(vu)w+w(vu)-w(uv)+v(uw)+(wu)v-(wv)u)\\
+bca\otimes(-w(vu)+(vw)u+(wv)u-v(wu))=0.
\end{multline*}
From the right sides of the tensors, we obtain the result.
\end{proof}

\begin{proposition}\label{notKoszul}
The operad $\mathcal{A}s_3$ is not Koszul.
\end{proposition}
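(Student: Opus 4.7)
The natural strategy is to apply the Ginzburg--Kapranov criterion: if a quadratic operad $\mathcal{P}$ is Koszul, then its generating function $g_{\mathcal{P}}(t)=\sum_{n\geq 1}\frac{\dim \mathcal{P}(n)}{n!}t^n$ and that of its Koszul dual must satisfy
$$g_{\mathcal{P}}(-g_{\mathcal{P}^!}(-t))=t.$$
The plan is to compute the first few coefficients of $g_{\mathcal{A}s_3}$ and $g_{\mathcal{A}s_3^!}$ and exhibit a mismatch in degree $4$ (or $5$).

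First I would record the low-arity dimensions. Trivially $\dim\mathcal{A}s_3(1)=1$ and $\dim\mathcal{A}s_3(2)=2$. From the basis $abc,acb,bac,bca$ displayed in the proof of the previous proposition (the two remaining monomials $cab,cba$ are reducible by the defining identity) we have $\dim\mathcal{A}s_3(3)=4$. On the dual side, by the previous proposition $\mathcal{A}s_3^!$ is the $(-1,1)$-operad, i.e.\ left-alternative algebras subject to the cyclic associator relation \eqref{sym}; here $\dim\mathcal{A}s_3^!(1)=1$, $\dim\mathcal{A}s_3^!(2)=2$, and in arity $3$ the dimension is computed directly from the two quadratic relations.

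Next I would push the dimension count into arity $4$, where the obstruction typically shows up. For $\mathcal{A}s_3$ I would build a Gröbner--Shirshov rewriting system from the defining relations, in the orientation already used in the proof above,
$$cab\longrightarrow abc+bac-acb,\qquad cba\longrightarrow abc+bac-bca,$$
complete it if necessary, and enumerate the multilinear normal monomials in four variables. For $\mathcal{A}s_3^!(4)$ I would do the analogous rewriting from the presentation of $(-1,1)$-algebras, or invoke the known dimension of the multilinear component from the literature on these algebras. The whole computation is automated by Albert \cite{Albert}, which the paper already uses for similar tasks.

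Finally, I would substitute the truncated series into the Ginzburg--Kapranov functional equation and check that the coefficient of $t^n/n!$ fails to vanish for some $n\leq 5$. The main obstacle is the bookkeeping in arity $4$ on both sides: completing the Gröbner basis for $\mathcal{A}s_3$ is delicate because the defining identity has six terms with signs, so new relations can arise at degree $4$, and the same care is needed for the dual $(-1,1)$-operad. As a backup route, one can avoid one of the two computations by displaying an explicit nonzero cycle in the Koszul complex of $\mathcal{A}s_3$ at some low homological degree, but the generating-series approach is more uniform and parallels the strategy used for $\mathcal{A}s_1$ and $\mathcal{A}s_2$ earlier in the paper.
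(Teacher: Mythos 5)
Your proposal follows essentially the same route as the paper: compute the low-arity dimensions of $\mathcal{A}s_3$ and of its Koszul dual and show that the Ginzburg--Kapranov functional equation on the generating series fails. One caveat: in the paper's computation the coefficient of $t^4$ in $H(H^!(t))-t$ actually vanishes and the first obstruction appears only at $t^5$ (using $\dim\mathcal{A}s_3(5)=1$ and $\dim\mathcal{A}s_3^{\;!}(5)=213$), so the dimension count must be carried through arity $5$ rather than stopping at arity $4$ as your plan emphasizes.
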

\begin{proof}
Firstly, let us calculate the dimensions of the operads $\mathcal{A}s_3$ and $\mathcal{A}s_3^{\;!}$ by means of the package \cite{DotsHij}, 
we get the following results:
\begin{center}
\begin{tabular}{c|ccccccc}
 $n$ & 1 & 2 & 3 & 4 & 5 \\
 \hline 
 $\dim(\mathcal{A}s_3(n)) $ & 1 & 2 & 4 & 1 & 1
\end{tabular}
\end{center}
and
\begin{center}
\begin{tabular}{c|ccccccc}
 $n$ & 1 & 2 & 3 & 4 & 5 \\
 \hline 
 $\dim(\mathcal{A}s_3^{\;!}(n)) $ & 1 & 2 & 8 & 41 & 213
\end{tabular}
\end{center}
According to the obtained tables, the first few terms of the Hilbert series of the operads $\mathcal{A}s_3$ and $\mathcal{A}s_3^{\;!}$ are
$$H(t)=-t+t^2-4t^3/6+t^4/24-t^5/120+O(t^6)$$
and
$$H^!(t)=-t+t^2-8t^3/6+41t^4/24-213t^5/120+O(t^6)$$
Thus,
$$H(H^!(t))=t+t^5/60+O(t^6)\neq t.$$
By \cite{GK94}, the operad $\mathcal{P}_2^{\;\;!}$ is not Koszul.
\end{proof}

Let us list the defining identities of the operad $\Der\mathcal{A}s_3:=\Nov\circ\mathcal{A}s_3$. Straightforward calculations give a noncommutative Novikov operad with the following identities:
\begin{multline*}
(a\succ b)\succ c-a\succ (b\succ c)+b\prec (a\succ c)-(b\prec a)\prec c\\
-b\prec(c\prec a)+(b\prec c)\prec a-c\prec (b\prec a)+(c\prec b)\prec a=0,
\end{multline*}
\begin{multline*}
(b\prec a)\succ c-b\succ (a\succ c)+(a\succ b)\succ c-a\succ (b\succ c)\\
-(a\succ c)\succ b+a\succ (c\succ b)-(c\prec a)\succ b+c\succ(a\succ b)=0,
\end{multline*}
\[
a\succ(b\succ c)+b\succ(a\succ c)-(b\succ c)\prec a-(c\prec b)\prec a=0,
\]
\[
(a\prec b)\prec c+(b\succ a)\prec c-b\succ(c\succ a)-c\succ(b\succ a)=0
\]
and
\[
(a\succ b)\prec c+(b\prec a)\prec c-(b\prec c)\prec a-(c\succ b)\prec a=0.
\]

\begin{proposition}
Any algebra in the variety $\Der\mathcal{A}s_3$ can be embedded into an appropriate algebra from the variety $\mathcal{A}s_3^{(d)}$.
\end{proposition}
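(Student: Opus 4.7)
The plan is to mirror the strategy used for the analogous embedding propositions about $\mathcal{A}s_1$ and $\mathcal{A}s_2$. Those proofs rest on two ingredients: (i) the identification $\Nov \circ \mathcal{V} = \Nov \otimes \mathcal{V}$ of the white Manin product with the Hadamard tensor product of quadratic operads, and (ii) the general embedding theorems of \cite{KolMashSar} and \cite{erlagol2021}, which guarantee that any algebra in $\Nov \otimes \mathcal{V}$ embeds into a suitable algebra of $\mathcal{V}^{(d)}$ via the rules $a \succ b = d(a)b$ and $a \prec b = a\,d(b)$. Accordingly the goal reduces to verifying that this Manin/Hadamard identification holds for $\mathcal{V}=\mathcal{A}s_3$, after which the result becomes a direct invocation of the cited theorems.

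The first step is therefore to establish $\Nov \circ \mathcal{A}s_3 = \Nov \otimes \mathcal{A}s_3$. By definition the white Manin product is a quotient of the Hadamard tensor product, so what must be checked is that no additional relations appear. This is a purely quadratic comparison in arity $3$: one contrasts the dimension of the relation space generated by combining the two Novikov identities with the single defining identity $abc+bac-bca-cba=0$ of $\mathcal{A}s_3$ against the dimension of the full relation space of the tensor product. The five identities displayed immediately before the proposition form a natural candidate set of generators, and one confirms that they already span the full ternary relation space, yielding the desired equality.

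Once the Manin/Hadamard identification is in hand, the conclusion is immediate. Given a $\Der\mathcal{A}s_3$-algebra $A$, one applies the construction of \cite{erlagol2021}: take the free $\mathcal{A}s_3$-algebra on two disjoint copies of a basis of $A$, one copy playing the role of generators and the other the role of their formal derivatives, impose the relations that encode both the $\mathcal{A}s_3$-structure and the $\Der\mathcal{A}s_3$-operations, and show that the induced map from $A$ into this envelope is injective. The embedding theorem of \cite{KolMashSar} then packages the result into the sought embedding into an algebra of $\mathcal{A}s_3^{(d)}$.

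The main obstacle is the verification of $\Nov \circ \mathcal{A}s_3 = \Nov \otimes \mathcal{A}s_3$. Because $\mathcal{A}s_3$ is not Koszul by Proposition~\ref{notKoszul}, the Koszul-duality arguments that often trivialize such comparisons are unavailable, and a genuine dimension count is required; in practice this is conveniently carried out with an operadic computer algebra package. Should the equality unexpectedly fail, the strategy would have to be adjusted by constructing the envelope by hand and establishing injectivity via a Gr\"obner--Shirshov argument in the free $\mathcal{A}s_3^{(d)}$-algebra.
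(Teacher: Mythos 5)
Your proposal follows essentially the same route as the paper: the paper's proof is exactly the one-line observation that $\Nov\circ\mathcal{A}s_3=\Nov\otimes\mathcal{A}s_3$ followed by an appeal to \cite{KolMashSar} and \cite{erlagol2021}, which is the skeleton of your argument. One small caveat: describing the verification of $\Nov\circ\mathcal{A}s_3=\Nov\otimes\mathcal{A}s_3$ as ``a purely quadratic comparison in arity $3$'' understates the check (the white product and the Hadamard product agree in arity $3$ essentially by construction, so the content of the equality lies in higher arities), but since the paper asserts this identification without any justification at all, your write-up is if anything more detailed than the original.
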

\begin{proof}
Since $\Nov\circ\mathcal{A}s_3=\Nov\otimes\mathcal{A}s_3$, the result immediately follows from \cite{KolMashSar} and \cite{erlagol2021}.
\end{proof}

Let us compute $\pre\textrm{-}\mathcal{A}s_3:=\pre\textrm{-}\Lie\bullet\mathcal{A}s_3$, and we obtain a dendriform operad with the following identities:
\[
a\succeq(b\succeq c)+b\succeq(a\succeq c)=(b\succeq c)\preceq a+(c\preceq b)\preceq a,
\]
\[
a\succeq(b\preceq c)+(b\preceq a)\preceq c=(b\preceq c)\preceq a+c\succeq(b\preceq a)
\]
and
\[
(a\preceq b)\preceq c+(b\succeq a)\preceq c=b\succeq(c\succeq a)+c\succeq(b\succeq a).
\]

\begin{proposition}
Let us define a new operation $\star$ on operad $\pre\textrm{-}\mathcal{A}s_3$ as follows:
\[a\star b=a\preceq b+a\succeq b.\]
Then an algebra $(X,\star)$ is $\mathcal{A}s_3$. Indeed, a free algebra $\pre\textrm{-}\mathcal{A}s_3\<X\>$ can be embedded into the algebra $\mathcal{A}s_3\<X\>$ with the Rota-Baxter operator $R$ as follows:
\[
a\preceq b=aR(b)\;\;\textrm{and}\;\;a\succeq b=R(a)b.
\]
\end{proposition}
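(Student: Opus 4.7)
The plan is to handle the two assertions in sequence, following the template set by the analogous Propositions for $\mathcal{A}s_1$ and $\mathcal{A}s_2$. For the identity claim, substitute $a\star b = a\preceq b + a\succeq b$ into the defining identity of $\mathcal{A}s_3$ and expand into sixteen terms in the generators $\preceq,\succeq$. Grouping these terms by the left/right pattern of the two nested operations produces four natural blocks, each of which matches---up to sign and relabeling---a linear combination of the three $\pre\text{-}\mathcal{A}s_3$ identities listed above together with uses of the dendriform axioms inherited from $\pre\text{-}\Lie\bullet\mathcal{A}s_3$. The cancellation is mechanical and of the same flavor as the (omitted) verifications for $\mathcal{A}s_1$ and $\mathcal{A}s_2$.

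For the embedding, I would first verify the pointwise statement: in any $\mathcal{A}s_3$-algebra $(A,\cdot)$ equipped with a Rota--Baxter operator $R$ of weight zero, the operations $a\preceq b := aR(b)$ and $a\succeq b := R(a)b$ satisfy both the dendriform axioms and the three $\pre\text{-}\mathcal{A}s_3$ identities. The dendriform axioms follow from the standard Ebrahimi-Fard--Guo computation using $R(x)R(y) = R(xR(y)) + R(R(x)y)$. For the three additional identities, direct substitution reveals a striking pattern: the first identity collapses to $R(a)R(b)c + R(b)R(a)c - R(b)cR(a) - cR(b)R(a) = 0$, which is exactly the $\mathcal{A}s_3$ identity applied to the triple $(R(a),R(b),c)$; the second reduces to the $\mathcal{A}s_3$ identity on $(R(a),b,R(c))$; and the third to the $\mathcal{A}s_3$ identity on $(a,R(b),R(c))$. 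Each holds in $A$ by assumption, and no Rota--Baxter rewriting is actually needed at this step.

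The genuine obstacle is injectivity of the induced morphism from $\pre\text{-}\mathcal{A}s_3\<X\>$ into the derived algebra of the free Rota--Baxter $\mathcal{A}s_3$-algebra on $X$. The cleanest route is to invoke a general operadic theorem of the form $\pre\text{-}\Lie\bullet\mathcal{V}\cong(\mathcal{V}\text{-}\mathrm{RB})^{\mathrm{der}}$, parallel to the classical identification $\pre\text{-}\Lie\bullet\As=\mathrm{Dend}$ together with its Ebrahimi-Fard--Guo realization; failing such a black box, one may argue directly by exhibiting a Gr\"obner--Shirshov basis for the free Rota--Baxter $\mathcal{A}s_3$-algebra and matching the resulting dimensions of the derived subalgebra, degree by degree, with those of $\pre\text{-}\mathcal{A}s_3\<X\>$. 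The subtle point I expect to dominate the work is controlling how the $\mathcal{A}s_3$ identity interacts with Rota--Baxter words of depth $\ge 2$, so as to ensure that no hidden relations between the derived $\preceq,\succeq$-words are forced beyond those already in $\pre\text{-}\mathcal{A}s_3$.
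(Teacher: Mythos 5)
The paper states this proposition without any proof at all, so there is no argument of record to compare yours against; what you have written is a reasonable reconstruction of what the author presumably had in mind. Your two verification steps are correct and I can confirm the computations: the three extra identities of $\pre\textrm{-}\mathcal{A}s_3$ reduce, after substituting $a\preceq b=aR(b)$ and $a\succeq b=R(a)b$ and using associativity alone, to the $\mathcal{A}s_3$ identity $xyz+yxz-yzx-zyx=0$ evaluated on the triples $(R(a),R(b),c)$, $(R(a),b,R(c))$ and $(a,R(b),R(c))$ respectively, with the Rota--Baxter relation needed only for the underlying dendriform axioms; likewise the first claim (that $\star$ satisfies the $\mathcal{A}s_3$ identity) is a finite expansion, and in fact both facts are instances of the general splitting formalism of Bai--Bellier--Guo--Ni (the paper's reference for the dendriform case), since the black Manin product with $\pre\textrm{-}\Lie$ is exactly the bisuccessor realized by Rota--Baxter operators. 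The only part you leave genuinely open is injectivity of the induced map $\pre\textrm{-}\mathcal{A}s_3\<X\>\to\mathcal{A}s_3\<X\cup R\>$, and you are right that this is the real content of the word ``embedded''; the Gubarev--Kolesnikov embedding theorem (the paper's other cited reference) and its extensions to general varieties via Gr\"obner--Shirshov bases are the intended black box, so your plan is sound, but as written this step is a pointer to the literature rather than a proof --- which, to be fair, is no less than the paper itself provides.
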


\begin{proposition}
The polarization of $\mathcal{A}s_3$ is
\[
[\{a,c\},b]=\{[a,b],c\}-\{a,[b,c]\},
\]
\[
[b,[a,c]]=\{\{a,b\},c\}-\{a,\{b,c\}\},
\]
\[
\{a,\{b,c\}\}-3/2\{[a,c],b\}-3/2\{[a,b],c\}-1/2\{\{a,c\},b\}-1/2\{\{a,b\},c\}=0,
\]
and
\[
\{a,[b,c]\}= -1/2\{[a,c],b\}+1/2\{[a,b],c\}+ 1/2\{\{a,c\},b\}-1/2\{\{a,b\},c\}.
\]
\end{proposition}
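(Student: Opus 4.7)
The plan is a direct substitution. Writing the associative product in polarized form as $xy = [x,y] - \{x,y\}$, we expand each of the four triple products in $abc + bac - bca - cba$ by applying the polarization twice — first to break the outer factor, then to break the inner one. Each $(xy)z$ contributes four terms, so altogether we obtain sixteen monomials distributed among the four possible shapes $[[\cdot,\cdot],\cdot]$, $[\{\cdot,\cdot\},\cdot]$, $\{[\cdot,\cdot],\cdot\}$ and $\{\{\cdot,\cdot\},\cdot\}$, with the three variables $a,b,c$ occurring in various orders and signs.

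The next step is to reduce modulo the two identities already known to hold for any polarization of an associative algebra, namely
\[
[\{a,c\},b]=\{[a,b],c\}-\{a,[b,c]\} \qquad \text{and} \qquad [y,[x,z]]=\{\{x,y\},z\}-\{x,\{y,z\}\},
\]
which are stated in the introduction from \cite{MarklRemm}. Using the first of these, every occurrence of the mixed shape $[\{\cdot,\cdot\},\cdot]$ can be traded for terms of shape $\{[\cdot,\cdot],\cdot\}$. Using the second, every nested antisymmetric shape $[[\cdot,\cdot],\cdot]$ that appears can be reorganized via the Jacobi identity of $[\cdot,\cdot]$ (which is already a consequence of the two Markl--Remm axioms) and, where mixed with symmetric brackets, eliminated in favor of shape $\{\{\cdot,\cdot\},\cdot\}$. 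After this reduction every surviving term is outer-symmetric, so the identity lives entirely in the $\{\cdot,\cdot\}$-tower attached to either a Jordan inner bracket $\{\cdot,\cdot\}$ or a Lie inner bracket $[\cdot,\cdot]$.

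Finally, the outer-symmetric expression splits naturally into two pieces according to whether the innermost bracket is symmetric or antisymmetric (equivalently, according to the symmetry type under $b \leftrightarrow c$ of the linear combination). Each piece must vanish independently, and reading off coefficients — there are five independent shapes on each side, namely $\{a,\{b,c\}\}$, $\{\{a,b\},c\}$, $\{\{a,c\},b\}$, $\{[a,b],c\}$, $\{[a,c],b\}$ and their analogue with $\{a,[b,c]\}$ — gives precisely the two stated formulas, with the $\pm 1/2$ and $\pm 3/2$ coefficients appearing as the solutions of a small linear system.

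The only obstacle is bookkeeping: the sixteen raw monomials acquire different signs from the antisymmetry of $[\cdot,\cdot]$ under the various cyclic and transpositional rearrangements needed to bring each to canonical form, and one must be scrupulous about these signs before combining like terms. Once the monomials are normalized (fixing $a$ as the leftmost argument in each inner bracket, for instance) the calculation reduces to a short linear algebra exercise, which is the sense in which the result is immediate by direct computation.
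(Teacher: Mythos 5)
Your proposal is correct and is essentially the paper's own argument: the paper's proof is a one-line appeal to direct computation, and the computation it has in mind is exactly the substitution-and-reduction you describe. Two small remarks. First, the reduction is shorter than you suggest: since $abc+bac=(ab+ba)c$ and $ab+ba$ is purely symmetric, the sixteen raw monomials collapse at once to the shapes $[\{\cdot,\cdot\},\cdot]$ and $\{\{\cdot,\cdot\},\cdot\}$ only --- the $[[\cdot,\cdot],\cdot]$ and $\{[\cdot,\cdot],\cdot\}$ contributions cancel in pairs --- so only the first Markl--Remm identity is needed; the second one and the Jacobi identity play no role, and the defining identity reduces to $[\{a,b\},c]+\{\{a,b\},c\}=[\{b,c\},a]+\{\{b,c\},a\}$ before you symmetrize and antisymmetrize over $b\leftrightarrow c$. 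Second, watch the normalization: with the convention $xy=[x,y]-\{x,y\}$ quoted in the introduction (so $\{x,y\}=-\tfrac12(xy+yx)$), this procedure yields $\{a,\{b,c\}\}+\tfrac32\{[a,c],b\}+\tfrac32\{[a,b],c\}-\tfrac12\{\{a,c\},b\}-\tfrac12\{\{a,b\},c\}=0$, whereas the signs printed in the proposition correspond to $\{x,y\}=\tfrac12(xy+yx)$. That discrepancy is an inconsistency of the paper rather than of your argument, but if you follow the introduction's convention literally you will not reproduce the stated coefficients, so fix the convention explicitly before reading them off.
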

\begin{proof}
The result can be proved similarly to Proposition \ref{polarization}. Consider the following identities:
$$abc+bac-bca-cba=2[\{a,b\},c]+2\{\{a,b\},c\}-2[\{b,c\},a]-2\{\{b,c\},a\}$$
and
\begin{multline*}
abc+acb-cba-cab=[[a,b],c]+[\{a,b\},c]+\{[a,b],c\}+\{\{a,b\},c\}\\
+2[[a,c],b]+2\{[a,c],b\}+[[b,c],a]-[\{b,c\},a]+\{[b,c],a\}-\{\{b,c\},a\}.
\end{multline*}
These identities correspond to the following row vectors:
$$(0,0,0,-1/2,0,1/2,0,0,0,-1/2,0,1/2)$$
and
$$(1/4,1/2,1/4,-1/4,0,1/4,1/4,1/2,1/4,-1/4,0,1/4),$$
respectively. To obtain the desired result, we replace the last row of the matrix $[R]$ with these two vectors.
\end{proof}

\begin{theorem}\label{commutatorAs3}
All identities of algebra $\mathcal{A}s_3^{(-)}\<X\>$ up to degree $4$ follow from anti-commutative, Jacobi and the following identity:
\[
[[[a,b],c],d]=0.
\]
\end{theorem}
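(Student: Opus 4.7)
The plan follows the template established for $\mathcal{A}s_1^{(-)}$ and $\mathcal{A}s_2^{(-)}$ earlier in the paper. Two things need to be verified: (i) the identity $[[[a,b],c],d]=0$ genuinely holds in $\mathcal{A}s_3^{(-)}\<X\>$, and (ii) no additional multilinear identities of degree at most $4$ are required beyond antisymmetry, Jacobi, and this triple-bracket identity.

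For (i), I would expand $[[[a,b],c],d]$ into a linear combination of the $24$ multilinear monomials of degree $4$ in $a,b,c,d$ and reduce modulo the defining identity (\ref{as3}) together with its permutations of the variables. Proposition~\ref{notKoszul} records $\dim\mathcal{A}s_3(4)=1$, so the target space is essentially trivial and the reduction can be completed either by hand or using Albert \cite{Albert}. In particular, since $\mathcal{A}s_3(4)$ is a one-dimensional $S_4$-module, the nested commutator is forced to be a scalar multiple of the unique surviving monomial, and one has only to check that this scalar is $0$.

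For (ii), let $\mathcal{L}$ denote the free Lie algebra modulo the T-ideal generated by $[[[a,b],c],d]$. By Jacobi and antisymmetry every multilinear Lie monomial of degree $4$ can be expressed as a linear combination of right-normed brackets $[[[x_1,x_2],x_3],x_4]$, so the multilinear degree-$4$ component of $\mathcal{L}$ vanishes; its multilinear degree-$3$ component remains the usual $\{[[a,b],c],\,[[a,c],b]\}$ inherited from $\Lie\<X\>$. It therefore suffices to show that the natural map $\mathcal{L}\to\mathcal{A}s_3^{(-)}\<X\>$ is injective in degrees $\leq 4$. In degree $4$ there is nothing to verify, and in degree $3$ one checks that
\[
\lambda_1[[a,b],c]+\lambda_2[[a,c],b]=0
\]
in $\mathcal{A}s_3^{(-)}\<X\>$ forces $\lambda_1=\lambda_2=0$, a routine linear-algebra calculation after expanding the commutators and reducing by (\ref{as3}), entirely analogous to the corresponding step in the proofs for $\mathcal{A}s_1^{(-)}$ and $\mathcal{A}s_2^{(-)}$.

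The main obstacle is the bookkeeping in (i): although $\dim\mathcal{A}s_3(4)=1$ makes the answer dimension-forced up to a scalar, one still has to push the expansion of $[[[a,b],c],d]$ through the rewriting rules induced by (\ref{as3}) to confirm that the resulting coefficient vanishes. Once this single reduction is in hand, the rest of the argument is dimension counting parallel to the earlier theorems in the paper.
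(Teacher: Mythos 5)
Your proposal is correct and follows essentially the same route as the paper: verify the three listed identities hold in $\mathcal{A}s_3^{(-)}\<X\>$ by direct expansion (or Albert), observe that the added identity kills all left-normed degree-$4$ monomials and hence the entire multilinear degree-$4$ component of the quotient Lie algebra, and then check that $\lambda_1[[a,b],c]+\lambda_2[[a,c],b]=0$ forces $\lambda_1=\lambda_2=0$ in degree $3$. The only quibble is terminological: the brackets $[[[x_1,x_2],x_3],x_4]$ you call ``right-normed'' are the left-normed monomials the paper refers to.
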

\begin{proof}
Verification of the listed identities can be performed using computer algebra systems such as Albert \cite{Albert}.

Since every monomial in the free Lie algebra can be expressed as a linear combination of left-normed monomials, it suffices to show that there are no new identities in degree $3$. This is equivalent to showing that the following equation admits only the trivial solution:
\[
\lambda_1[[a,b],c]+\lambda_2[[a,c],b]=0.
\]
The calculations of this equation are straightforward.
\end{proof}

\begin{theorem}\label{anti-commutatorAs3}
All identities of algebra $\mathcal{A}s_3^{(+)}\<X\>$ up to degree $4$ follow from commutative and the following identities:
\[
\{\{\{a,b\},c\},d\}=\{\{\{a,c\},b\},d\},
\]
and
\[
\{\{\{a,b\},c\},d\}=\{\{\{a,b\},d\},c\}=\{\{\{a,b\},\{c,d\}\}.
\]
\end{theorem}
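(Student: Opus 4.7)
The plan is to follow the template set by Theorem~\ref{commutatorAs3} and by the analogous anti-commutator theorems earlier in the paper (notably the one for $\mathcal{A}s_2^{(+)}\<X\>$). I proceed in three steps.

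First, I would verify that the two stated identities hold in $\mathcal{A}s_3^{(+)}\<X\>$. One expands each Jordan bracket using $\{x,y\}=xy+yx$, reduces the resulting associative words modulo the degree-$4$ consequences of the defining identity $abc+bac-bca-cba=0$, and checks that both differences vanish. This is most efficiently carried out in Albert~\cite{Albert}. It yields the canonical surjection $\mathcal{S}\mathcal{A}s_3^{(+)}\<X\> \twoheadrightarrow \mathcal{A}s_3^{(+)}\<X\>$, where $\mathcal{S}\mathcal{A}s_3^{(+)}\<X\>$ denotes the free commutative (non-associative) algebra modulo the two displayed identities.

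Second, I would exhibit a multilinear basis of $\mathcal{S}\mathcal{A}s_3^{(+)}\<X\>$ in degrees $3$ and $4$. In degree $3$ neither new identity has a consequence, so the basis is $\{\{a,b\},c\},\ \{\{a,c\},b\},\ \{\{b,c\},a\}$. In degree $4$ the free commutative algebra has $15$ multilinear monomials: twelve ``caterpillar-shaped'' $\{\{\{\cdot,\cdot\},\cdot\},\cdot\}$ and three ``balanced'' $\{\{\cdot,\cdot\},\{\cdot,\cdot\}\}$. The first identity is the transposition $(b\ c)$ of the four leaf labels in $\{\{\{a,b\},c\},d\}$, the second is $(c\ d)$, and inner commutativity $\{a,b\}=\{b,a\}$ gives $(a\ b)$. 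These three adjacent transpositions generate $S_4$, so all twelve caterpillars collapse to the single representative $\{\{\{a,b\},c\},d\}$. Together with the three balanced monomials this yields a basis of $\mathcal{S}\mathcal{A}s_3^{(+)}(4)$ of size four.

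Third, I would verify that these basis elements remain linearly independent in $\mathcal{A}s_3^{(+)}\<X\>$. Equivalently, the equations
\[
\lambda_1\{\{a,b\},c\}+\lambda_2\{\{a,c\},b\}+\lambda_3\{\{b,c\},a\}=0
\]
and
\[
\beta_1\{\{\{a,b\},c\},d\}+\beta_2\{\{a,b\},\{c,d\}\}+\beta_3\{\{a,c\},\{b,d\}\}+\beta_4\{\{a,d\},\{b,c\}\}=0
\]
should admit only the trivial solutions. Each basis element is expanded in the associative monomial basis of $\mathcal{A}s_3(4)$ via $\{x,y\}=xy+yx$ and reduced by the defining identity; the resulting finite linear system is then solved.

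The main obstacle is the degree-$4$ linear independence check: expanding the four degree-$4$ basis elements into associative monomials and reducing by every degree-$4$ consequence of $abc+bac-bca-cba=0$ yields a sizeable linear system; as in the analogous theorems, this bookkeeping is best handled by Albert.
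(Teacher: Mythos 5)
Your overall strategy is the same as the paper's: verify the two identities hold (via Albert), exhibit a multilinear basis of the free commutative algebra $\mathcal{S}\mathcal{A}s_3^{(+)}\<X\>$ modulo those identities in degrees $3$ and $4$, and check that the images of the basis elements remain linearly independent in $\mathcal{A}s_3^{(+)}\<X\>$. In degree $3$ you and the paper agree. In degree $4$, however, you list a basis of size four (the single left-normed class $\{\{\{a,b\},c\},d\}$ together with the three balanced monomials $\{\{a,b\},\{c,d\}\}$, $\{\{a,c\},\{b,d\}\}$, $\{\{a,d\},\{b,c\}\}$), whereas the paper lists only $\{\{\{a,b\},c\},d\}$ and accordingly checks only the single equation $\beta_1\{\{\{a,b\},c\},d\}=0$. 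Your count of the quotient is the correct one: the two defining identities are supported entirely on left-normed monomials, so their multilinear degree-$4$ consequences cannot involve the balanced monomials, which therefore survive as three further independent basis elements. This is a genuine discrepancy with the paper's proof, and your more careful bookkeeping is to your credit.

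The difficulty is that your final step --- verifying that these four elements are linearly independent in $\mathcal{A}s_3^{(+)}\<X\>$ --- cannot succeed. By the paper's own dimension table (see the proof of Proposition \ref{notKoszul}), $\dim\mathcal{A}s_3(4)=1$, so the multilinear degree-$4$ component of $\mathcal{A}s_3^{(+)}\<X\>$ is at most one-dimensional and four elements of it cannot be independent. Concretely, $\{\{a,b\},\{c,d\}\}$ must be a scalar multiple of $\{\{\{a,b\},c\},d\}$ in $\mathcal{A}s_3\<X\>$, and that relation is an identity of $\mathcal{A}s_3^{(+)}\<X\>$ which does not follow from commutativity and the two identities in the statement. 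Hence either the theorem requires additional degree-$4$ identities of the form $\{\{a,b\},\{c,d\}\}=\lambda\,\{\{\{a,b\},c\},d\}$ (and its permutations), or the dimension table is in error; in either case your proof as written stalls at the independence check. Before proceeding you should compute $\dim\mathcal{A}s_3(4)$ independently and expand the three balanced monomials in the associative basis to determine which of these alternatives holds.
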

\begin{proof}
Verification of the listed identities can be performed using computer algebra systems such as Albert \cite{Albert}.

Let us consider the free commutative algebra satisfying the identities
\[
\{\{\{a,b\},c\},d\}=\{\{\{a,c\},b\},d\}=\{\{\{a,b\},d\},c\}=\{\{\{a,b\},\{c,d\}\},
\]
which we denote by $\mathcal{S}\mathcal{A}s_3^{(+)}\<X\>$. The basis monomials of the multilinear component of $\mathcal{S}\mathcal{A}s_3^{(+)}\<X\>$ in degrees 3 and 4 are
\[
\{\{a,b\},c\},\;\{\{b,c\},a\},\{\{a,c\},b\}
\]
and 
\[
\{\{\{a,b\},c\},d\},
\]
respectively. To prove that there are no additional identities in the algebra $\mathcal{A}s_3^{(+)}\<X\>$ up to degree 4, it is sufficient to show that the following equations admit only the trivial solutions:
\[
\lambda_1\{\{a,b\},c\}+\lambda_2\;\{\{b,c\},a\}+\lambda_3\{\{a,c\},b\}=0,
\]
\[
\beta_1\{\{\{a,b\},c\},d\}=0.
\]
The calculations for these equations are straightforward.
\end{proof}

\begin{remark}
There is no need to consider the variety of associative algebras satisfying identity (\ref{as4}), i.e. the variety $\mathcal{A}s_4$, as the results for the commutator and anti-commutator in this case coincide with Theorems \ref{commutatorAs3} and \ref{anti-commutatorAs3}, respectively. Moreover, an analogue of Proposition \ref{notKoszul} holds for the operad $\mathcal{A}s_4$ due 
an algebra $\mathcal{A}s_3\<X\>$ isomorphic to $\mathcal{A}s_4\<X\>$ under the mapping defined by opposite operation.
\end{remark}

Let us summarise some of the obtained results in the following table:
\begin{table}[h!]
\centering
\begin{tabular}{|c|p{3.5cm}|p{3cm}|c|p{3cm}|p{3cm}|}
\hline
\textbf{Type} & \textbf{Defining Identity} & \textbf{Dual Operad} & \textbf{Koszul} & \textbf{Commutator} & \textbf{Anti-commutator} \\
\hline
I & $abc + bac + acb + cab + bca + cba = 0$ & Alternative & No & Lie algebra with additional identity of degree $5$ & Mock-Lie algebra with additional identity of degree $5$ \\
\hline
II & $abc - bac - acb + cab + bca - cba = 0$ & Assosymmetric & No & Metabelian Lie algebra & Jordan algebra with additional identity of degree $4$ \\
\hline
III & $abc + bac - bca - cba = 0$ & Left-alternative and additional identity (\ref{sym}) & No & Nilpotent Lie algebra & Jordan algebra with additional identity of degree $4$ \\
\hline
IV & $abc + acb - cba - cab = 0$ & Right-alternative and additional identity (\ref{sym}) & No & Nilpotent Lie algebra & Jordan algebra with additional identity of degree $4$ \\
\hline
\end{tabular}
\end{table}

Open problems:
\begin{enumerate}
    \item Provide a complete list of identities satisfied by the first-type associative algebra under the commutator and anti-commutator.
    \item Is it possible to embed every metabelian Lie algebra into an appropriate second-type associative algebra via the commutator?
    \item Describe the class of Jordan algebras that can be embedded into a suitable second-type associative algebra via the anti-commutator.

\end{enumerate}

\subsection*{Author Contributions}
The first author did all calculations and revised the main text. The second author suggested the main idea and wrote the main text.

\subsection*{Acknowledgments}
This research was funded by the Science Committee of the Ministry of Science and Higher Education of the Republic of Kazakhstan (Grant No. AP23484665).

\end{document}